\newtheorem{theorem}{Theorem}
\newtheorem{proposition}[theorem]{Proposition}
\newtheorem{lemma}[theorem]{Lemma}
\def\N{{\mathbb N}}
\def\EE{{\mathbb E}}
\def\PP{{\mathbb P}}
\def\ind{{\mathbf{1}}}
\def\d{\partial}
\def\bG{\mathbb{G}}
\def\Ecal{{\mathcal E}}
\def\Mcal{{\mathcal M}}
\def\Fcal{{\mathcal F}}
\def\EE{{\mathbb E}}
\def\PP{{\mathbb P}}
\def\ind{{\mathbf{1}}}
\def\R{{\mathbb R}}
\def\N{{\mathbb N}}
\newcommand{\pp}{{p\in\mathbf{P}}}
\newcommand{\on}{\operatorname}
\def\mfrak{\mathfrak{m}}
\newcommand{\BEAS}{\begin{eqnarray*}}
\newcommand{\EEAS}{\end{eqnarray*}}
\newcommand{\BEA}{\begin{eqnarray}}
\newcommand{\EEA}{\end{eqnarray}}
\newcommand{\BEQ}{\begin{equation}}
\newcommand{\EEQ}{\end{equatioyn}}
\newcommand{\BIT}{\begin{itemize}}
\newcommand{\EIT}{\end{itemize}}
\newcommand{\BNUM}{\begin{enumerate}}
\newcommand{\ENUM}{\end{enumerate}}
\title{Replica Bounds by Combinatorial Interpolation\\ for Diluted Spin Systems}
\author{Marc Lelarge \\
\emph{INRIA-ENS} \\
\emph{Paris, France}\\
marc.lelarge@ens.fr
\and 
Mendes Oulamara\\
\emph{École Normale Supérieure} \\ 
\emph{Paris, France}\\
mendes.oulamara@ens.fr
}
\date{}
\begin{document}
\maketitle

\begin{abstract}
In two papers Franz, Leone and Toninelli proved bounds
for the free energy of diluted random constraints satisfaction problems, for
a Poisson degree distribution \cite{physicpo} and a general distribution \cite{physic}. 
Panchenko and Talagrand \cite{diluted} simplified the proof and generalized the result of \cite{physicpo} for the Poisson case.
We provide a new proof for the general degree distribution case and as a corollary, we obtain new bounds for the size of the largest independent set (also known as hard core model) in a large random regular graph.
Our proof uses a combinatorial interpolation based on biased random walks \cite{justin} and allows to bypass the arguments in \cite{physic} based on the study of the Sherrington-Kirkpatrick (SK) model.

\textbf{keywords}: interpolation method, Parisi formula, configuration model, random walks, free energy, hard-core model
\end{abstract}

\section{Introduction}

We consider diluted spin glass models where particles interact through a Hamiltonian defined on a sparse random
graph, i.e. the number of interactions remains of order one when the size of the system tends to infinity.
Once we fix the probability law generating the random graph and the 
Hamiltonian, natural questions arise: Does the normalized free energy have a
limit as the size of the graph, $N$, tends to infinity? In such a case, is it possible to get an analytic formula for this limit?
In this paper, we provide a general upper bound on the possible limit. Our proof relies on a new variation of the interpolation method adapted to our setting.

The interpolation method has been introduced by Guerra and Toninelli
\cite{guerra2002thermodynamic,guerra2003broken,guerra2012fluctuations} to study the convergence and bounds on the limit of the free energy 
of the SK model \cite{skmodel} and other mean-field spin glass models
on complete graphs. 
These ideas originally used to study fully connected random graph
have been extended by various authors to the study of diluted (or sparse) random graph. For instance, Bayati, Gamarnik and Tetali \cite{bayati2010combinatorial}
showed the existence of a limit for various models of Hamiltonian on the Erdös-Renyi and the $d$-regular random
graphs. Abbe and Montanari used it \cite{abbe2013conditional} to show the convergence of conditional 
entropy in the context of coding theory.
In \cite{justin} Salez devised a discrete interpolation method based on random walks (hence there is no need of a continuous
parameter anymore) well suited to combinatorial models of random graphs with prescribed degree in order to show the existence of the limit for a wide range of models.

In \cite{physicpo}, Franz and Leone proved an asymptotic bound, linked to the Parisi formula \cite{parisi1,parisi2},
on the free energy for the $p$-spin and the $k$-SAT models on random graphs with a Poisson degree distribution.
To do so, they interpolate between a random hypergraph, and a graph in which every hyperedge of size $k$ is replaced by
$k$ independent \emph{sites}, by varying continuously the rate of the Poisson distribution. Panchenko and Talagrand 
noticed \cite{diluted} that the proof can be generalized to a wider class of Hamiltonians verifying some conditions.

Franz, Leone and Toninelli also published a proof \cite{physic} of the same bound for random graphs with a general degree
distribution. That proof uses advanced results coming from the study of the SK model \cite{skmodel},
such as Ghirlanda-Guerra identities and Hamiltonian gaussian perturbation. They use a discrete
deterministic interpolation, where at each time-step, they delete one edge and add $k$ sites.

In the setting of coding theory, Montanari \cite{montanari2004} used approximations based on Poisson distributions to approximate
a general degree distribution.

In our paper, we use the formalism of \cite{diluted}, with similar weak hypotheses on the Hamiltonian model, and the idea
of a random discrete interpolation as in \cite{justin} to prove the Parisi asymptotic bound on the free energy for a very 
general class of graphs, where the degree distribution as well as the distribution of the size of the hyperedges are prescribed.
Our contribution can be seen as doing what Panchenko and Talagrand \cite{diluted} did for the Poisson distribution case \cite{physicpo}, 
but for the general degree distribution case \cite{physic}.
The structure of the interpolation produces a very natural proof for our combinatorial graph model which encompass most of the cited models. It only uses basics of the theory of martingales.

To illustrate our bound, we provide explicit calculations for the hard-core model on regular graphs. 
The ground-breaking work \cite{ding2016maximum} shows that for sufficiently large degrees, the bound given by the one step replica symmetry breaking (1-RSB) is the exact value of the asymptotic size of a maximum independent set in a random regular graph. As a corollary of our result, we prove that this 1-RSB formula is an upper bound for the size of the maximum independent set for all degrees. 
For small degrees, our bounds have been numerically computed in \cite{hcrevisited} and improve on the best known rigorous upper bounds given in  \cite{mckay1987independent,hoppen2013local}.
Note that for small degrees, it is expected that the exact limit for the maximum size of the independent set will not anymore be the 1-RSB formula but the full RSB formula \cite{hcrevisited} (both formula are the same when the degree is sufficiently large). Also, we did not compute numerically this full RSB formula, our main general result shows that the full RSB formula is an upper bound on the size of the maximum independent set. Showing that this bound is tight for all degrees is a challenging open problem.

\section{Model}
Let $\mathbf{P}$ be a set of integers greater or equal to 2.
We consider
multigraphs of the form $G=(\mathbf{V} , (\mathbf{E}_p)_{p\in\mathbf{P}})$ where 
$\mathbf{V} = \llbracket 1, N \rrbracket$ for some $N$, 
is a set of vertices, and for any $p\in\mathbf{P}$, $\mathbf{E}_p$  
is a set of $p$-edges, i.e. each $e\in \mathbf{E}_p$ contains $p$ vertices (not necessarily distinct) $e(1),e(2),\dots, e(p)\in\mathbf{V}$ and we denote $\d e=\{e(1),\dots, e(p)\}$ the (multi-)set of these vertices. The $\ell$-th element of $\mathbf{E}_p$ will be denoted by $e^p_\ell$. Note that an edge can appear with some multiplicity: $\d e^p_{\ell_1} = \d e^p_{\ell_2}$ for $\ell_1 \neq \ell_2$.

On such graphs, the space of spin configurations is denoted by $\Sigma_N=\{-1,1\}^\mathbf{V}$. We consider independent random functions $(\theta_p)_{p\in\mathbf{P}}$ where $\theta_p:\{-1,1\}^{p}\mapsto \mathbb{R}$ and for each $\pp$, a sequence $(\theta_{p,e})_{e\in {\bf E}_p}$ of i.i.d. copies of $\theta_p$ where the sequences $(\theta_{p,e})_{e\in {\bf E}_p}$ and $(\theta_{p',e})_{e\in {\bf E}_{p'}}$ are independent for $p\neq p'$. Let $h:\{-1,1\}\to \R$ be a random function and $(h_i)_{i\in \mathbf{V}}$ be i.i.d. copies of the function $h$
(note that we have $h(\sigma)=\mu \sigma+\nu$ for some random $\mu,\nu$).

We define the following Hamiltonian on the graph G
for $\sigma \in \Sigma_N$~:
\begin{equation}
-H_G(\sigma)= \sum_{\pp} \left( \sum_{e\in \mathbf{E}_p} \theta_{p,e}(\sigma_{\d e}) \right)
                              + \sum_{i\in \mathbf{V}}h_i(\sigma_i),
\end{equation}
where $\sigma_{\d e}=(\sigma_i)_{i\in \d e}$.
\noindent
As in \cite{diluted}, we make the following assumptions on the random functions $\theta_p$. For each $\pp$, we assume that there is a random function $f_p:\{-1,1\}\mapsto\mathbb{R}$ with
i.i.d. copies $f_{p,1}, \ldots, f_{p,p}$ and two random variables $a_p,b_p$ independent
of the previous functions, satisfying the conditions $\forall \sigma_1,\dots,\sigma_p \in\{-1,1\}^p$:
\begin{equation}
\label{cond1}
\begin{split}
\exp\theta_p(\sigma_1,\ldots,\sigma_{p}) &= a_p(1+b_pf_{p,1}(\sigma_1)\ldots f_{p,p}(\sigma_{p})), \\
\forall n\ge 1, \mathbb{E}[(-b_p)^n ] & \ge 0, \\
|b_pf_{p,1}(\sigma_1)\ldots f_{p,p}(\sigma_{p})| & < 1 \text{ a.s.}
\end{split}
\end{equation}
In addition, we assume that there is a constant $\kappa>0$ such that:
\begin{equation}
\label{cond2}
\forall\pp,\: |\theta_p|  \le\kappa \mbox{ and }|h|  \le\kappa \mbox{, a.s.}
\end{equation}
Finally, we also assume that for any 
$\pp$
at least one of the following conditions is satisfied~:
\begin{equation}
\label{cond3}
p \text{ is even \quad or \quad } f_p\ge0 \text{ a.s.}
\end{equation}
We now define the sequence of random graphs that we will consider. For fixed degrees $d=(d_i)_{i\in \llbracket 1, N \rrbracket}\in \N^N$ and  edge cardinalities $E=(E_p)_\pp\in \N^{\mathbf{P}}$, we define $\bG(d, E)$ the random graph built according to the configuration model: it is drawn uniformly among all multi-graphs with $N$ vertices, exactly $E_p$ $p$-edges for each $\pp$ and such that the $i$-th vertex has degree $d_i$. Note that for this set to be non-empty, we need to have $\sum_{i=1}^N d_i = \sum_{\pp}pE_p$.

We will consider two sequences $d^N=(d^N_i)_{1\le i\le N}$ and $E^N=(E^N_p)_\pp$ for $N\in \mathbb{N}$
and assume that the empirical distributions of these sequences tend to two probability measures $\mu$ on $\N$ and $\nu$ on $\bf P$ in the following strong sense:
\BEA
\label{cond:deg1}\forall N\in \N, \quad \sum_{i=1}^N d^N_i &=& \sum_{\pp}pE^N_p\\
\label{cond:deg2}\forall k\in\N, \mu_N(k) \coloneqq \frac{1}{N}\sum_{i=1}^N\ind(d^N_i=k) && 
    \lim_{N\to\infty}\sum_{k\in\mathbb{N}}k|\mu_N(k)-\mu(k)| = 0\\
\label{cond:deg3}\forall\pp, \nu_N(p) \coloneqq \frac{E^N_p}{\sum_{q\in\mathbf{P}}E^N_q}&&
    \lim_{N\to\infty}\sum_{\pp}p|\nu_N(p)-\nu(p)| = 0
\EEA
In addition, we assume that
\BEA
\label{cond:deg4}\sup_{N\geq 1} \frac{1}{N} \sum_{i=1}^N(d^N_i)^2 <\infty \text{ , }
\sum_{n\in\N}n^2\mu(n) < \infty \text{ and }
\sum_{n\in\N}n\nu(n) < \infty
\EEA
Note that under this last assumption, the probability for our graph to be simple stays bounded away from zero as $N$ tends to infinity \cite{svante1,svante2}.

For such sequences, we define $G^N=\bG(d^N, E^N)$  a sequence of random graphs and the associated free energy:
\BEA
\label{eq:defFN}F_N = \frac{1}{N}\mathbb{E}\log\sum_{\sigma\in \Sigma_N}\exp\left( -H_{G^N}(\sigma)\right),
\EEA
where $\mathbb{E}$ is the expectation with respect to the randomness of the graph and Hamiltonian.
We also define the probability distribution $\rho$ on $\mathbf{P}$ corresponding to the size biased distribution of $\nu$:
\BEAS
\forall \pp, \quad \rho(p) = \frac{p\nu(p)}{\sum_q q\nu(q)}.
\EEAS

{\bf Application to the hard-core model on $d$-regular graphs:}\\
To illustrate our results, we will consider the hard-core (or independent set) model on a $d$-regular graph $G=(\mathbf{V},\mathbf{E})$ with fugacity $\lambda > 1$. An independent set $I\subset V$ in a graph $G$ is a subset of the vertices such that if $v_1,v_2\in I$ then there is no edge between $v_1$ and $v_2$ in $G$. We give a weight $\lambda^{|I|}$ to such set where $|I|$ is the size of $I$. 

This model \emph{per se} does not verify Conditions (\ref{cond1},\ref{cond2}), therefore we relax it by a parameter $A>0$ and we will show later that we can
make $A$ tend to $+\infty$ to get the actual hard-core model. $A$ corresponds to an energy cost for each edge violating the constraint by connecting two vertices of the independent set.
We define the following Hamiltonian:
  \BEAS
\exp\left(-H_G(\sigma)\right) = \lambda^{\sum_{i\in \mathbf{V}} \frac{1+\sigma_i}{2}}\prod_{(i,j)\in \mathbf{E}}\left(1-(1-e^{-A})\frac{(1+\sigma_i)(1+\sigma_j)}{4}\right).
\EEAS
A site $i$ with $\sigma_i=1$ (resp. $\sigma_i=-1$) is called occupied (resp. unoccupied).
This case corresponds to $\mu(\{d\})=1$ since the graph is $d$-regular, $\mathbf{P}=\{2\}$ since there are only 2-edges, and
\BEAS
a_2=1,\quad b_2 = -\frac{1-e^{-A}}{4},\quad f_2(\sigma) = 1+\sigma, \quad h(\sigma)= \frac{\log \lambda}{2}(1+\sigma).
\EEAS
In particular, conditions (\ref{cond1},\ref{cond2},\ref{cond3}) are satisfied with $\kappa =\max(\log \lambda , A)$.

Let $\mathcal{I}(G_N)$ be the set of all independent sets of $G_N$ a random $d$-regular graph and $\EE$ is the expectation with respect to the randomness of the graph.
Then it is easy to check that 
$$
\frac{\log \lambda}{N}\EE\left[\max_{I\in \mathcal{I}(G_N)}|I|\right]
 \leq
\frac{1}{N}\EE\log \sum_{I\in \mathcal{I}(G_N)}\lambda^{|I|} \leq F_N. 
$$
Since \cite{bayati2010combinatorial} shows the existence of the following limit:
\BEAS
\lim_{N\to \infty} \frac{1}{N}\EE\left[\max_{I\in \mathcal{I}(G_N)}|I|\right]=\alpha^*,
\EEAS
we have
\begin{equation}
\label{eq:approxhc}
\alpha^* \log \lambda \le \liminf_{N\to \infty} F_N.
\end{equation}
As a result, we see that an upper bound on $F_N$ directly translate into an upper bound on $\alpha^*$.

\section{Main Results}

In order to state our result, we need to introduce another notation taken from \cite{diluted}. Given a function $f:\{-1,+1\}^p\to \R$ and a vector of real numbers $x=(x_1,\ldots, x_p)$, we define for $\sigma\in \{\pm 1\}$,
\BEAS
\langle f\rangle_x^-(\sigma) = \frac{\sum_{\epsilon_1,\dots, \epsilon_{p-1}=\pm 1} f(\epsilon_1,\dots,\epsilon_{p-1},\sigma) \exp\sum_{\ell=1}^{p-1} x_\ell \epsilon_\ell}{\sum_{\epsilon_1,\dots, \epsilon_{p-1}=\pm 1} \exp\sum_{\ell=1}^{p-1} x_\ell \epsilon_\ell},
\EEAS
and
\BEAS
\langle f\rangle_x = \frac{\sum_{\epsilon_1,\dots, \epsilon_{p}=\pm 1} f(\epsilon_1,\dots,\epsilon_{p-1},\epsilon_p) \exp\sum_{\ell=1}^{p} x_\ell \epsilon_\ell}{\sum_{\epsilon_1,\dots, \epsilon_{p}=\pm 1} \exp\sum_{\ell=1}^{p} x_\ell \epsilon_\ell}.
\EEAS
Let us define for each $\pp$, the random function:
\BEAS
\mathcal{E}_p(\epsilon_1,\dots, \epsilon_p) = \exp(\theta_p(\epsilon_1,\dots, \epsilon_p)),
\EEAS
so that under Condition (\ref{cond1}), we have
\BEA
\label{eq:defEx}\langle\mathcal{E}_p\rangle_x^-(\sigma) = a_p\left( 
    1+b_p f_{p,p}(\sigma)\prod_{1\le l\le p-1}
        \frac{\on{Av}f_{p,l}(\epsilon)\exp(x_l\epsilon)}{\on{ch}(x_l)}
    \right),
    \EEA
    where $\on{Av}$ means average over $\epsilon =\pm 1$, and,
\BEAS
\langle\mathcal{E}_p\rangle_x = a_p\left( 
    1+b_p\prod_{1\le l\le p}
        \frac{\on{Av}f_{p,l}(\epsilon)\exp(x_l\epsilon)}{\on{ch}(x_l)}
    \right).
\EEAS
Finally, since 
$\left.\langle\mathcal{E}_p\rangle_x^-\right(\sigma)$ is positive as a consequence of (\ref{cond1}),
we define
\BEA
\label{eq:defU}U_p(\theta_p,x_1,\dots,x_{p-1},\sigma) = \log
\left.\langle\mathcal{E}_p\rangle_x^-\right(\sigma).
\EEA
Given an arbitrary distribution $\zeta$ on $\R$, we consider an i.i.d. sequence $x^p_{i,\ell}$ for $\pp, i,\ell\geq 1$ with distribution $\zeta$ 
and $(\theta_{p,i})_{i\geq 1}$ i.i.d. copies of $\theta_p$, and define for $\pp$ and $i\geq 1$,
\BEA
\label{eq:defUi}U_{p,i}(\sigma;\zeta) = U_p(\theta_{p,i}, x^p_{i,1},\dots, x^p_{i,p-1},\sigma).
\EEA

\subsection{Replica Symmetric Bound (RS)}
\begin{theorem}
    \label{thmrs}
    If conditions (\ref{cond1},\ref{cond2},\ref{cond3}) and (\ref{cond:deg1},\ref{cond:deg2},\ref{cond:deg3},\ref{cond:deg4}) are satisfied, then for any distribution $\zeta$ on $\R$, we have
\begin{multline}
\label{eq:thmrs}
F_N \le 
\mathbb{E}\left[\log\left(\sum_{\sigma=\pm 1}\exp\left( \sum_{i=1}^d U_{p_i,i}(\sigma;\zeta) + h(\sigma) \right)\right) \right] \\
    -\mathbb{E}[d]\mathbb{E}\left[\frac{p_1-1}{p_1}\log\langle \mathcal{E}_{p_1}\rangle_x\right]
    + o_N(1)
\end{multline}
where $d$ is a random variable with law $\mu$, $(p_i)_{i\ge 1}$ is a sequence of i.i.d. random variables with law $\rho$ 
    and $x=(x_i)_{i\ge 1}$ is a sequence of i.i.d. real random variables with distribution $\zeta$.
    \end{theorem}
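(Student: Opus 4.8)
The plan is to run the combinatorial interpolation of \cite{justin} adapted to the hypergraph configuration model, replacing the continuous Poisson-rate parameter of \cite{physicpo,diluted} by a discrete time parameter along which $p$-edges are removed one at a time and replaced by ``pending half-edges'' carrying i.i.d.\ random fields drawn from $\zeta$. Concretely, I would first set up an interpolating sequence of random structures $G^N = G^N_0, G^N_1, \dots, G^N_T$, where $T = \sum_{\pp} E^N_p$ is the total number of hyperedges: at each step we pick (in a size-biased-consistent way) one hyperedge $e \in \mathbf{E}_p$, delete it, and for each of its $p$ endpoints attach an independent ``cavity'' term of the form $U_{p,i}(\sigma;\zeta)$, using the functions $\langle \mathcal{E}_p\rangle^-_x$ and the quantity $U_p$ defined in (\ref{eq:defU}), (\ref{eq:defUi}). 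Writing $\phi_t = \frac{1}{N}\EE\log\sum_\sigma \exp(-H_{G^N_t}(\sigma))$, the goal is the telescoping bound $F_N = \phi_0 = \phi_T - \sum_{t=0}^{T-1}(\phi_{t+1}-\phi_t)$, with $\phi_T$ identified (up to $o_N(1)$) with the first term in (\ref{eq:thmrs}) and each increment controlled in sign.

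The second step is the sign analysis of a single interpolation increment. Fixing everything except the edge being processed at time $t$, I would expand $\phi_{t+1} - \phi_t$ as the difference of two $\log$-of-Gibbs-average expressions: removing the hyperedge $e\in\mathbf{E}_p$ multiplies the partition function by a factor involving $\langle \mathcal{E}_p\rangle$ evaluated at the local ``cavity fields'' seen by the $p$ endpoints, while adding back the $p$ independent site terms contributes factors $\langle\mathcal{E}_p\rangle^-_x(\sigma)$ at each endpoint. Using the explicit product forms in (\ref{eq:defEx}) together with Condition (\ref{cond1})---namely $\exp\theta_p = a_p(1 + b_p f_{p,1}\cdots f_{p,p})$, the moment condition $\EE[(-b_p)^n]\ge 0$, and the a.s.\ bound $|b_p f_{p,1}\cdots f_{p,p}|<1$---I would Taylor-expand both sides in powers of the $b_p$-terms, match the coefficients, and show term by term that the net contribution of the edge-removal-plus-site-addition has the right sign. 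This is exactly the place where the boundedness (\ref{cond2}) and the parity/positivity alternative (\ref{cond3}) enter, guaranteeing that all the cross terms one would otherwise worry about either vanish or have a definite sign; here I am essentially importing the algebraic lemmas of \cite{diluted} (their Poisson-case increment estimates), but applied to a single deterministic edge swap rather than to an infinitesimal rate change.

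The third step is bookkeeping: accounting for how many times each vertex $i$ gets a pending half-edge (on average $d^N_i$, matching the $\sum_{i=1}^d U_{p_i,i}$ sum with $d\sim\mu$), how the edge sizes are sampled (the size-biased law $\rho$ appearing because a uniformly random half-edge belongs to a $p$-edge with probability proportional to $p\nu(p)$), and why the number of ``leftover'' site terms at the end matches $\EE[d]$, producing the correction term $-\EE[d]\,\EE[\frac{p_1-1}{p_1}\log\langle\mathcal{E}_{p_1}\rangle_x]$. The $\frac{p_1-1}{p_1}$ factor should emerge because when a $p$-edge is deleted we add $p$ site terms but ``consume'' one unit of the $\log\langle\mathcal{E}_p\rangle_x$ normalization per edge, and there are $E_p = \frac{p}{\,\cdot\,}$-many such, so the ratio of edges to half-edges is $1/p$. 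I would then invoke the convergence hypotheses (\ref{cond:deg2})--(\ref{cond:deg4})---in particular the uniform second-moment bound on degrees and the $L^1$-with-weight convergence of $\mu_N,\nu_N$---to pass from the empirical sums to the measures $\mu,\nu,\rho$ with an $o_N(1)$ error, using martingale/concentration estimates for the sum of bounded increments (each increment is $O(\kappa/N)$ by (\ref{cond2})), as in the strategy announced in the introduction.

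\textbf{Main obstacle.} I expect the crux to be the sign control of the generic increment (Step~2): one must verify that the discrete edge-swap---not merely its first-order Taylor expansion, as in the continuous method---produces a one-signed correction, which requires a careful combinatorial matching of the expansions of $\langle\mathcal{E}_p\rangle^-_x$ at the $p$ freshly created sites against the expansion of $\langle\mathcal{E}_p\rangle$ at the deleted edge, exploiting the full strength of $\EE[(-b_p)^n]\ge 0$ uniformly in $n$ and handling the odd-$p$ case via (\ref{cond3}). A secondary technical point is ensuring the random choice of which edge to process at each step is coupled correctly with the configuration model so that the endpoint cavity fields are genuinely i.i.d.\ $\zeta$-distributed in the limit and independent of the local structure, which is where the local-weak-convergence features of the configuration model (and the simplicity bound from \cite{svante1,svante2}) are used.
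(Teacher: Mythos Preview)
Your proposal has the right high-level architecture and correctly identifies the replica/Taylor mechanism from \cite{diluted}, but it misses the paper's central technical device, and the gap is exactly at what you flag as the ``main obstacle.''

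You propose a \emph{deterministic} swap: delete one $p$-edge $e$ and attach $p$ cavity sites at its endpoints. The one-step increment then has the form
\[
\mathbb{E}\log\Bigl\langle \prod_{j=1}^{p} \exp U_{p,j}(\sigma_{i_j};\zeta)\Bigr\rangle_{\mfrak}
\;-\;
\mathbb{E}\log\bigl\langle \exp\theta_{p,e}(\sigma_{\partial e})\bigr\rangle_{\mfrak},
\]
with all $p$ site factors inside a \emph{single} Gibbs average. Expanding $\prod_j(1+b_{p,j}f_{p,j}(\sigma_{i_j})\cdots)$ and then taking the logarithm produces cross-terms indexed by all nonempty subsets $S\subseteq\{1,\ldots,p\}$; these do not collapse into the quantity $B_n^{p}-pB_nC_n^{p-1}+(p-1)C_n^{p}$ that the polynomial inequality $x^{p}-pxy^{p-1}+(p-1)y^{p}\ge 0$ controls. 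In the Poisson setting of \cite{diluted} this never arises because the infinitesimal rate change isolates a single added edge against a single added site; your ``import the algebraic lemmas of \cite{diluted}'' step breaks here. The deterministic swap is precisely the route of \cite{physic}, and that paper needs Ghirlanda--Guerra identities and a Gaussian perturbation of the Hamiltonian to close the argument---machinery your proposal does not invoke.

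The paper avoids this by making the interpolation \emph{stochastic} (following \cite{justin}): start from the all-sites graph, and at each step remove one site and, with probability $1/p$, add one $p$-edge at uniformly random free half-edges. The expected one-step increment is then
\[
\frac{1}{p}\,\mathbb{E}\log\bigl\langle e^{\theta_{p,e}}\bigr\rangle_{\mfrak}
\;-\;
\mathbb{E}\log\bigl\langle e^{U_{p,s}}\bigr\rangle_{\mfrak},
\]
where the new edge and the removed site are \emph{single} objects, attached at independent uniform free half-edges of the \emph{same} matching $\mfrak$. This is exactly the combination the replica expansion reduces to $B_n^{p}-pB_nC_n^{p-1}+(p-1)C_n^{p}\ge 0$ (Proposition~\ref{propstep}). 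The price is that the number of occupied half-edges is now a biased random walk rather than a constant; one needs an Azuma--Hoeffding bound on its fluctuations and a Lipschitz estimate to match the walk's endpoints to the true all-edges and all-sites free energies (Proposition~\ref{propend}), which is where the buffer $\delta\sim\sqrt{S_q\log S_q}$ and the error terms $\Delta_q$ come from.

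A minor point: your secondary worry that the cavity fields need to be ``genuinely i.i.d.\ $\zeta$-distributed in the limit'' via local weak convergence is misplaced. The $x^{p}_{i,\ell}$ are i.i.d.\ with law $\zeta$ by construction, independently of the graph; no limit argument is needed for that.
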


{\bf Application to the hard-core model on $d$-regular graphs:}\\
We have:
  \BEAS
  \langle \mathcal{E}_2\rangle_x^-(\sigma) &=& 1-(1-e^{-A})\frac{1+\sigma}{2(e^{-2x_1}+1)}\\
  \langle \mathcal{E}_2\rangle_x &=& 1-\frac{1-e^{-A}}{(e^{-2x_1}+1)(e^{-2x_2}+1)}
  \EEAS
  Hence the right-hand term in Theorem \ref{thmrs} is given by:
  \begin{multline*}
F_N \leq  
\EE\left[ \log\left( 1+\lambda \prod_{i=1}^d \frac{e^{-2x_i}+e^{-A}}{1+e^{-2x_i}}\right)\right] \\
        -\frac{d}{2} \EE\left[\log\left( 1-\frac{1-e^{-A}}{(1+e^{-2x_1})(1+e^{-2x_2})}\right)\right]+o_N(1),
    \end{multline*}
where $x_1,\dots$ is a sequence of i.i.d. random variables with law $\zeta$. 
We can make the change of variable $\pi = \frac{1}{1+e^{-2x}}$. Moreover, only the error term on the right hand side depends on $N$, hence we can make $N$ tend to $+\infty$ to get:
\begin{multline*}
\limsup_{N \to \infty} F_N \leq  \EE\left[ \log\left( 1+\lambda \prod_{i=1}^d (1-(1-e^{-A})\pi_i)\right)\right] \\ -\frac{d}{2} \EE\left[ \log\left(1-(1-e^{-A})\pi_1\pi_2\right)\right],
\end{multline*}
  where $\pi_1,\dots$ is a sequence of i.i.d. random variables on $(0,1)$. The expression inside the expectations is easily dominated
  and we can push $A$ to $+\infty$. Hence by (\ref{eq:approxhc}):
\BEA
\alpha^* \log\lambda \leq 
 \EE\left[ \log\left( 1+\lambda \prod_{i=1}^d (1-\pi_i)\right)\right] -\frac{d}{2} \EE\left[ \log\left(1-\pi_1\pi_2\right)\right].
\EEA
   
In order to get the tightest bound, we should minimize the bound on $\alpha^*$ with respect to $\lambda$ and the distribution of $\pi$. To get 
an explicit formula, consider the case where the $\pi_i$'s are deterministic: $\pi_i=\pi \in (0,1)$.
We define
$\Phi(\lambda,\pi,\alpha) = \log \left(1+\lambda(1-\pi)^d\right)-\frac{d}{2}\log\left(1-\pi^2\right)-\alpha\log \lambda$, its minimal
value when $\alpha$ is fixed $\Phi_d(\alpha)=\inf_{\lambda,\pi}\Phi(\lambda,\pi,\alpha)$ and
$\alpha_{RS} = \inf\{\alpha>0,\: \Phi_d(\alpha)< 0\}$.
Thus $\Phi(\lambda,\pi,\alpha^*)\ge 0$ 
and  $\alpha^*\leq \alpha_{RS}$. 

For a fixed $\alpha$, we need to minimize $\Phi(\lambda,\pi,\alpha)$ and an easy computation leads to the choice of $\pi$ and $\lambda$ given by:
\BEAS
\pi=\lambda(1-\pi)^d \text{\quad \quad} \alpha = \frac{\lambda(1-\pi)^d}{1+\lambda(1-\pi)^d}
\EEAS
thus we get $\pi=\frac{\alpha}{1-\alpha}$ and
for $H(\alpha) = -\alpha \log(\alpha) - (1-\alpha)\log(1-\alpha)$
\BEA
\label{def:phi_rs}
\Phi_d(\alpha)
&=& H(\alpha) - d\left(\frac{1}{2}(1-2\alpha)\log(1-2\alpha)-(1-\alpha)\log(1-\alpha)\right),
\EEA
which is exactly the expression appearing in a first moment computation, see Lemma 2.1 in \cite{ding2016maximum}.

\subsection{The 1-step of Replica Symmetry Breaking Bound (1-RSB)}
\label{sec_1rsb}

We denote by $\mathcal{L}_1$ the set of probability measures on $\R$, and $\mathcal{L}_2$ the set of probability measures on $\mathcal{L}_1$. 
We will obtain a bound depending on the parameters $m\in (0,1)$ and $\zeta^{(2)}\in \mathcal{L}_2$. We consider the couple of random variables $(\zeta^{(1)},x)$ with the following properties. The random variable $\zeta^{(1)}$ is in $\mathcal{L}_1$ distributed according to $\zeta^{(2)}$. Conditionally on $\zeta^{(1)}$, the real random variable $x$ is distributed according to $\zeta^{(1)}$. We consider i.i.d. copies $(\zeta^{p,(1)}_{i,\ell},x^p_{i,\ell})_{i,\ell,p\in \N}$ of $(\zeta^{(1)},x)$. We define for $\pp$ and $i\geq 1$,
\BEA
\label{eq:defUi-1rsb}U_{p,i}(\sigma;\zeta^{(2)}) = U_p(\theta_{p,i}, x^p_{i,1},\dots, x^p_{i,p-1},\sigma).
\EEA
Note that we are slightly abusing notation here. The definition above is similar to \eqref{eq:defUi} but $\zeta^{(2)}$ is now a distribution on $\mathcal{L}_1$. As a result, the $x^p_{i,\ell}$ are still i.i.d. but with an extra level of randomness as described above. This extra level of randomness is important in our 1-RSB bound given below.

\begin{theorem}\label{th:1rsb}
  If conditions (\ref{cond1},\ref{cond2},\ref{cond3}) and (\ref{cond:deg1},\ref{cond:deg2},\ref{cond:deg3},\ref{cond:deg4}) are satisfied, then for any $m\in (0,1)$ and $\zeta^{(2)}\in \mathcal{L}_2$, we have
  \begin{multline}
  \label{eq:th:1rsb}
F_N \le \frac{1}{m}\mathbb{E}\left[\log\mathbb{E'}\left[\left(\sum_{\sigma=\pm 1}\exp\left( \sum_{i=1}^d U_{p_i,i}(\sigma;\zeta^{(2)}) + h(\sigma) \right)\right)^m \right]\right] \\
    -\frac{\mathbb{E}[d]}{m}\mathbb{E}\left[\frac{p_1-1}{p_1}\log\mathbb{E'}\left(\langle \mathcal{E}_{p_1}\rangle_x\right)^m\right]
    + o_N(1),
    \end{multline}
where $\mathbb{E'}$ is the expectation with respect to $(x_l)$ and $(x^{p}_{i,\ell})$ for fixed $(\zeta^{(1)}_l)$ and $(\zeta^{p,(1)}_{i,\ell})$ and $\mathbb{E}$ denotes the expectation with respect to $(\zeta^{(1)}_l)$, $(\zeta^{p,(1)}_{i,\ell})$, 
$d$ with law $\mu$, $(p_i)_{i\ge 1}$ sequence of i.i.d. random variables with law $\rho$ and the random functions $h$, $(\theta_{p,i})$.
\end{theorem}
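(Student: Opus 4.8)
The plan is to deduce Theorem~\ref{th:1rsb} from Theorem~\ref{thmrs} by applying the replica-symmetric bound not to the original Hamiltonian $H_{G^N}$ but to a \emph{tilted} Hamiltonian built from $m$ coupled replicas together with an auxiliary ``cavity'' field governed by the extra level of randomness $\zeta^{(2)}$. More precisely, I would introduce the standard Parisi-type change of measure: fix a realization of the outer randomness (the $\zeta^{(1)}_l$, $\zeta^{p,(1)}_{i,\ell}$, the degrees, the $p_i$, and the functions $h,\theta_{p,i}$), and define a new spin variable $\tau=(\sigma^{(1)},\dots,\sigma^{(m)})\in\{\pm1\}^m$ with a new single-site weight that is the $m$-fold product of the original weight, so that $\sum_\tau \exp(\cdots)=(\sum_\sigma\exp(\cdots))^m$. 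Applying Theorem~\ref{thmrs} to this product system, with the role of $\zeta$ played by the (conditional) distribution $\zeta^{(1)}$, produces exactly the inner expression $\mathbb{E}'[(\sum_\sigma\exp(\sum_i U_{p_i,i}+h))^m]$ and $\mathbb{E}'[\langle\mathcal{E}_{p_1}\rangle_x^m]$ once one takes logarithms; the outer $\frac1m\mathbb{E}\log$ then comes from averaging over $\zeta^{(2)}$ and dividing by $m$ to return from the $m$-replica free energy to the original one.

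The first concrete step is to verify that the $m$-fold product Hamiltonian still satisfies Conditions~(\ref{cond1}), (\ref{cond2}), (\ref{cond3}). For~(\ref{cond2}) this is immediate with $\kappa$ replaced by $m\kappa$ (bounded since $m<1$, actually $\le\kappa$); for~(\ref{cond1}) one checks that if $\exp\theta_p=a_p(1+b_p\prod f_{p,\ell})$ then the product over $m$ replicas is again of the form $a_p'(1+b_p'\prod f_{p,\ell}')$ — here the $f$'s become vectors indexed by replica, and the key sign condition $\mathbb{E}[(-b_p')^n]\ge0$ and the bound $|b_p'\prod f'|<1$ must be checked, which is the delicate bookkeeping point. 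The cleanest route is probably not to literally re-expand the product but to observe that Theorem~\ref{thmrs} is a statement about a functional of the law of $\theta_p$ and $h$, and then to feed in the ``coupled'' functions directly; alternatively, one works at the level of the quantities $\langle\mathcal{E}_p\rangle_x$ and $U_p$, noting that for the product system $U_{p,i}^{\otimes m}(\tau;\zeta^{(1)})=\log\mathbb{E}'[\langle\mathcal{E}_p\rangle_x^m\mid\text{replica structure}]$-type identities hold after the appropriate averaging.

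The main obstacle, as usual in passing from RS to 1-RSB via interpolation, is correctly handling the \emph{two levels of randomness}: the outer distribution $\zeta^{(2)}$ on $\mathcal{L}_1$ must be threaded through so that the $x$-variables attached to different edges incident to the same cavity vertex share the same $\zeta^{(1)}$, while $x$-variables at different vertices are independent draws of $\zeta^{(1)}$. This is exactly the ``$\mathbb{E}'$ for fixed $\zeta^{(1)}$'' structure in the statement, and it forces the auxiliary field in the RS bound to be chosen as the pair $(\zeta^{(1)},x)$ rather than a flat distribution; getting the conditioning to land in the right place — inside the $\log$ for the inner $\mathbb{E}'$ but outside for $\frac1m\mathbb{E}\log$ — is where one must be careful. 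I would therefore organize the proof as: (i) define the $m$-replica product model and the auxiliary field $(\zeta^{(1)},x)$; (ii) check Conditions~(\ref{cond1})--(\ref{cond3}) survive; (iii) invoke Theorem~\ref{thmrs}; (iv) identify the two terms on the right-hand side with the claimed 1-RSB expressions by unwinding the definitions of $U_{p,i}(\cdot;\zeta^{(2)})$ and $\langle\mathcal{E}_{p_1}\rangle_x$; and (v) divide by $m$ and note that the free energy of the $m$-replica product system equals $m$ times the original $F_N$ up to the same $o_N(1)$, so the error term is unaffected.
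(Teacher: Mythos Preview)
Your proposal has a fundamental gap: the parameter $m$ in Theorem~\ref{th:1rsb} lies in $(0,1)$, not in $\mathbb{N}$. The construction ``define a new spin variable $\tau=(\sigma^{(1)},\dots,\sigma^{(m)})\in\{\pm1\}^m$'' and the identity $\sum_\tau\exp(\cdots)=(\sum_\sigma\exp(\cdots))^m$ only make sense when $m$ is a positive integer; for $m\in(0,1)$ there is no such product system, and step~(v) (``the free energy of the $m$-replica product system equals $m$ times the original $F_N$'') is meaningless. Even if $m$ were an integer, the $m$-fold product Hamiltonian would \emph{not} satisfy Condition~(\ref{cond1}): $\prod_{j=1}^m a_p\bigl(1+b_p\prod_\ell f_{p,\ell}(\sigma^{(j)}_\ell)\bigr)$ is not of the form $a'(1+b'\prod f')$, so your step~(ii) fails and Theorem~\ref{thmrs} cannot be invoked. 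The heuristic you describe is the physicists' replica trick, whose rigorous implementation for non-integer $m$ requires genuinely new machinery.

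The paper's proof is entirely different. It does not reduce to Theorem~\ref{thmrs} at all; instead it establishes a \emph{weighted} version of the interpolation (Theorem~\ref{thmgeneral}) in which the Gibbs measure lives on $\Sigma_N\times\Gamma$ with random weights $(v_\gamma)_{\gamma\in\Gamma}$, and then specializes $\Gamma=\mathbb{N}^r$ with $v_\gamma$ given by Derrida--Ruelle probability cascades (Poisson point processes with intensity $x^{-1-m_l}$). The key analytic fact is Proposition~\ref{propcascade} from \cite{diluted}, which turns $\mathbb{E}\log\sum_\gamma v_\gamma V(\gamma)$ into $\mathbb{E}\log T_0 V$, where $T_0$ is the iterated operator $(\mathbb{E}_l(\cdot)^{m_{l+1}})^{1/m_{l+1}}$; for $r=1$ this is exactly $\frac{1}{m}\mathbb{E}\log\mathbb{E}'[(\cdot)^m]$, producing the 1-RSB expression. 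Theorem~\ref{th:1rsb} is then the case $r=1$ of Theorem~\ref{thmstep}. The Ruelle cascades are precisely the device that gives rigorous meaning to ``$m\in(0,1)$ replicas''; there is no shortcut through the RS bound applied to a product system.
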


Note that if we put all the randomness in only one of the two levels of recursion, Theorem \ref{th:1rsb} reduces to Theorem \ref{thmrs}.
If $\zeta^{(2)}$ has only Dirac measures in its support, then there is no randomness in the second level: $\mathbb{E}'$ vanishes, the parameter
$m$ is cancelled and (\ref{eq:th:1rsb}) becomes (\ref{eq:thmrs}).
On the opposite, if we take $\zeta^{(2)}$ a Dirac mass concentrated on $\zeta\in \mathcal{L}_1$ there is no randomness in the first
level and when $m\to 0$ we get (\ref{eq:thmrs}).
In particular, this bound is a priori tighter than the replica symmetric bound. We demonstrate it on the hard-core model.
\newline

{\bf Application to the hard-core model on $d$-regular graphs:}\\
 We consider the 1-RSB bound for the hard-core model. The mapping $x\mapsto \frac{1}{1+e^{-2x}}$ maps $\R$ to $(0,1)$, so that with the same change of variable as above and making $N$ tend
 to $+\infty$ again, we get:
  \begin{multline*}
\limsup_{N\to\infty}  F_N \le \frac{1}{m}\mathbb{E}\log\mathbb{E'}\left[ \left(1+\lambda\prod_{i=1}^d(1-(1-e^{-A})\pi_i)\right)^{m}\right] \\
    - \frac{d}{2m}\mathbb{E}\log\mathbb{E'}\left[ (1-(1-e^{-A})\pi_1\pi_2)^{m} \right],
 \end{multline*}
 where the $\pi_i$'s are now independent random variables in $(0,1)$ with a random distribution $\eta^{(1)}_i$ where the $(\eta^{(1)}_i)$ are i.i.d. with law $\eta^{(2)}\in \mathcal{L}_2((0,1))$, where $\mathcal{L}_1((0,1))$ is the set of probability measures on $(0,1)$ and $\mathcal{L}_2((0,1))$ the set of probability measures on $\mathcal{L}_1((0,1))$.

\begin{table}
\center
\begin{tabular}{|r|c|c|c|c|c|c|c|c|}
\hline
$d$ & 3 & 4 & 5 & 6 & 7 & 8 & 9 & 10 \\
\hline
$\alpha_{RS}$ & 0.45907 & 0.42061 & 0.38868 & 0.36203 & 0.33944 & 0.32002 & 0.30310 & 0.28820  \\
\hline
$\alpha^{(1)}$ & 0.45086 & 0.41120 & 0.37927 & 0.35299 & 0.33089 & 0.31198 & 0.29556 & 0.28113  \\
\hline
$\alpha_u(r)$ & 0.45537 & 0.41635 & 0.38443 & 0.35799 & 0.33567& &&\\
\hline
$\alpha_\ell(r)$& 0.437575& 0.39213 & 0.35930& 0.33296 & 0.31068 &&&\\
\hline
\end{tabular}
\caption{Numerical values for $\alpha_{RS}>\alpha^{(1)}>\alpha^*$ which are upper bounds for the size of a maximum independent set in a random $d$-regular graphs. For comparison, previous known upper bounds $\alpha_u(r)$ \cite{mckay1987independent} as well as lower bounds $\alpha_\ell(r)$ \cite{hoppen2013local} are provided.}
\label{table:alpha}
\end{table}

We now give a particular choice for $\eta^{(2)}$ that will lead to an improvement on the RS bound. Let $q\in [0,1]$ be the probability that $\pi=1-\frac{1}{\lambda}$ and $1-q$ is the probability that $\pi=\frac{1}{\lambda}$, then $\eta^{(2)}$ (hence $\zeta^{(2)}$)  is chosen
to be trivial and concentrated on this measure.
With this choice, $\mathbb{E}$ vanishes (as explained above) and we will let $m\to 0$ as well as $\lambda\to \infty$ in order to get a better bound than the RS bound.
It turns out that a trivial measure $\zeta^{(2)}$ allows us to improve on the RS bound because we change it as we vary $\lambda$ consistently with $m$, and computations are made possible by the constant degree in the graph.
We now explain the next steps of the computation. First, the terms inside the expectations are bounded and we can use the dominated convergence theorem to make $A$ tend to $+\infty$. We also 
define $\beta$ by  $\log\beta = m\log\lambda$ and we get by (\ref{eq:approxhc}):
\BEAS
\alpha^*\log \beta \leq \log\mathbb{E'}\left[ \left(1+\lambda\prod_{i=1}^d(1-\pi_i)\right)^{m}\right]
    - \frac{d}{2}\log\mathbb{E'}\left[ (1-\pi_1\pi_2)^{m} \right]
\EEAS
where we can compute
 \BEAS
\mathbb{E'}\left[ \left(1+\lambda\prod_{i=1}^d(1-\pi_i)\right)^{m}\right] &=& \sum_{n=0}^d {d \choose n}q^n(1-q)^{d-n}\left(1+\lambda^{1-n}\left(1-\frac{1}{\lambda}\right)^{d-n} \right)^m
\EEAS
and
\begin{multline*}
\mathbb{E'}\left[ (1-\pi_1\pi_2)^{m} \right] = (1-q)^2\left(1-\frac{1}{\lambda^2}\right)^m \\
+2q(1-q)\left(1-\frac{\lambda-1}{\lambda^2}\right)^m +q^2\left( 1-\frac{(\lambda-1)^2}{\lambda^2}\right)^m.
\end{multline*}
Then by taking the limit $m\to 0$, $\lambda\to \infty$ in such a way that $m\log\lambda = \log \beta$, we obtain with:
\BEAS
\Phi^{1}(\beta,q,\alpha):=\log\left(1+(\beta-1)(1-q)^d\right) -\frac{d}{2}\log\left(1-q^2\left(1-\frac{1}{\beta}\right) \right)-\alpha\log \beta,
\EEAS
$\Phi^{(1)}(\beta,q,\alpha^*)\geq 0$ for all $\beta\geq 1$ and $q\in [0,1]$. Hence we define $\Phi^{(1)}(\alpha)=\inf_{q\in [0,1],\beta\geq 1}\Phi^{(1)}(\beta,q,\alpha)$ and $\alpha^{(1)} = \inf\{\alpha>0, \:\Phi^{(1)}(\alpha)<0\}$.
Minimizing in $q$ the function $\Phi^{(1)}(\beta,q,\alpha)$, we find that the optimal value for $q$ is the unique solution in
$[0,1]$ of the equation:
\BEAS
(\beta-1)(1-q)^d + (1-q)^{d-1} + (1-q) -1 = 0 \text{\quad i.e. \quad} \beta = \frac{q}{(1-q)^d}-\frac{q}{1-q}
\EEAS
Hence we can find an expression for $\Phi^{(1)}(\beta,q,\alpha)$ involving only $\alpha$ and $q$.
We did the numerical computations of $\alpha^{(1)}$ and $\alpha_{RS}$ (see Table \ref{table:alpha}).
Note that these values were already computed in \cite{hcrevisited} but we now have a proof that these values are rigorous upper bounds on 
$\alpha^*$ the size of a maximum independent set in the random $d$-regular graph. 
To the best of our knowledge, the best upper bounds on $\alpha^*$ for small degrees were derived by McKay in \cite{mckay1987independent}. 
These values $\alpha_u(r)$ are provided in Table \ref{table:alpha} as well as the lower bounds $\alpha_\ell(r)$ obtained by 
Hoppen and Wormald in \cite{hoppen2013local}.

\subsection{The r-step of Replica Symmetry Breaking Bound (r-RSB)}

For an integer $r\ge 1$, let $0<m_1<\ldots<m_r<1$ be some real parameters.
Let $\mathcal{L}_1$ be a set of probability measures on $\R$, and
by induction for $l\leq r$ we define $\mathcal{L}_{l+1}$ as a set
of probability measures on $\mathcal{L}_{l}.$
Let us fix $\zeta^{(r+1)} \in \mathcal{L}_{r+1}$ (our basic parameter, which is not random) and define a random sequence 
$(\zeta^{(r)}, \zeta^{(r-1)}, \ldots, \zeta^{(1)}, x)$ as follows.  
For $1\leq \ell\leq r+1$, conditionally on $(\zeta^{(r+1)},\dots,\zeta^{(\ell)})$,  $\zeta^{(\ell-1)}$ is an element of $\mathcal{L}_{\ell-1}$ distributed like $\zeta^{(\ell)}$.  
And conditionally on $(\zeta^{(r)},\dots,\zeta^{(1)})$, $x$ is a real random variable with distribution $\zeta^{(1)}$.

For $0\leq j\leq r-1$, we define $\Fcal_j$ the $\sigma$-algebra generated by 
$d$, $(p_i)_{i\ge 0}$, $h$, $(\theta_{p,i})$, 
$\zeta^{(r)}, \zeta^{(r-1)}, \ldots, \zeta^{(r-j)}$ , 
and we denote $\EE_j$ the expectation given $\Fcal_j$.
For a random variable $W \geq 0$ we define $T_rW = W$ and by induction, for $0 \leq l<r$ 
we define the random variable
$T_l W $  by
\begin{equation}
T_l W = \Bigl(
\EE_{l} (T_{l+1} W)^{m_{l+1}}
\Bigr)^{1/m_{l+1}}.
\label{T}
\end{equation}

\begin{theorem}
\label{thmstep}
    If conditions (\ref{cond1},\ref{cond2},\ref{cond3}) and (\ref{cond:deg1},\ref{cond:deg2},\ref{cond:deg3},\ref{cond:deg4}) are satisfied, then for any distribution $\zeta\in\mathcal{L}_{r+1}$, we have
\begin{multline}
F_N \le 
\mathbb{E}\log T_0\left(\sum_{\sigma=\pm 1}\exp\left( \sum_{i=1}^d 
        U_{p_i,i}(\sigma ; \zeta) + h(\sigma) \right)\right) \\
    -\mathbb{E}[d]\mathbb{E}\left[\frac{p_1-1}{p_1}\log T_0 \langle \mathcal{E}_{p_1}\rangle_{x}\right]
    + o_N(1)
\end{multline}
where $d$ is a random variable of law $\mu$,
$U_{p_i,i}(\sigma ; \zeta)$ is defined as in Section \ref{sec_1rsb} but for our new $x$ with $\zeta\in\mathcal{L}_{r+1}$, and
 $(p_i)_{i\ge 1}$ is a sequence of i.i.d. random variables  of law $\rho$. 
\end{theorem}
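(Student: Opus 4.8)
The plan is to prove the bound by a combinatorial interpolation in the spirit of \cite{justin}, combined with the hierarchical averaging operators $T_l$ of (\ref{T}); Theorems \ref{thmrs} and \ref{th:1rsb} will then be recovered by collapsing the hierarchy (to a single level, respectively to one non-trivial level), so only Theorem \ref{thmstep} needs a dedicated argument. First I would view $\bG(d^N,E^N)$ as a uniformly random bijection between the multiset of $M:=\sum_i d^N_i$ vertex half-edges and the multiset of edge-slots (the $\ell$-th slot of a $p$-edge, $1\le\ell\le p$), fix a uniformly random order on the slots, and run an auxiliary biased random walk $(S_t)_{t\ge 0}$ whose steps decide, at time $t$, whether the next free half-edge is paired to a uniformly chosen free slot --- revealing one more piece of a genuine $\theta_p$-interaction --- or is instead \emph{decoupled}, its contribution being replaced by an independent cavity term $U_{p,i}(\sigma_i;\zeta)$ on its vertex $i$ together with a $-\frac{p-1}{p}\log T_0\langle\mathcal{E}_{p}\rangle_x$-type credit added to the Hamiltonian. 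The point of Salez's device is that, although conditioning on the order in which slots are revealed distorts the conditional law of what remains, the drift of the walk can be tuned so that: (i) at time $0$ the model is exactly $H_{G^N}$; (ii) at the terminal time $K$ every surviving slot has contributed one cavity term, a vertex of degree $d$ carrying $d$ such terms with i.i.d.\ sizes of law $\rho$; and (iii) by the law of large numbers for the walk together with (\ref{cond:deg2},\ref{cond:deg3},\ref{cond:deg4}), the normalized total credit converges to $\mathbb{E}[d]\,\mathbb{E}[\frac{p_1-1}{p_1}\log T_0\langle\mathcal{E}_{p_1}\rangle_x]$.

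For the intermediate Hamiltonian $H_t$ with partition function $Z_t=\sum_{\sigma}\exp(-H_t(\sigma))$, set $\Psi_t:=\frac1N\log T_0 Z_t$, where $T_0$ is the nested $L^{m_l}$-average over $\zeta^{(r)},\dots,\zeta^{(1)}$ of (\ref{T}). By (i), $\mathbb{E}\Psi_0=F_N$ (no $\zeta$-randomness is present at time $0$, so $T_0$ acts trivially); by (ii)--(iii), $\mathbb{E}\Psi_K$ equals the right-hand side of the theorem up to $o_N(1)$, the error coming from replacing $\mu_N,\nu_N$ by $\mu,\rho$ and from the concentration of $K$ and of the number of decoupling steps, which is where (\ref{cond:deg4}) and the a.s.\ bounds (\ref{cond2}) enter. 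The heart of the argument is then to show that $(\Psi_{t\wedge K})_{t\ge 0}$ is a submartingale for the filtration $\mathcal{G}_t$ generated by the walk and the matching choices, i.e.\ $\mathbb{E}[\Psi_{t+1}-\Psi_t\mid\mathcal{G}_t]\ge 0$ at each step; the submartingale property together with optional stopping then gives $F_N=\mathbb{E}\Psi_0\le\mathbb{E}\Psi_K$, and this is the only place martingale theory is used.

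A single step changes $\log T_0 Z_t$ through the comparison between a product of decoupled cavity factors built from $\langle\mathcal{E}_{p}\rangle_x$ and the genuinely coupled factor $\langle\mathcal{E}_{p}\rangle_x^-$ (ultimately $\mathcal{E}_{p}$ once the edge is complete). Writing $\exp\theta_p=a_p(1+b_p\prod_{\ell}f_{p,\ell}(\sigma_\ell))$ as in (\ref{cond1}) and expanding these factors in powers of the random sign $\prod_{\ell}f_{p,\ell}$, each term of the resulting series carries a coefficient proportional to $\mathbb{E}[(-b_p)^n]\ge 0$, so the sign of the increment is controlled by the way the operators $T_l$ interact with this expansion: Hölder's inequality must be applied in the direction forced by $0<m_1<\dots<m_r<1$ so that replacing a coupled factor by its cavity surrogate can only increase $\mathbb{E}\log T_0 Z_t$, and when $p$ is odd one additionally invokes $f_p\ge 0$ from (\ref{cond3}) to pin down the sign of the residual term. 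The a.s.\ bounds (\ref{cond2}) and $|b_p\prod_{\ell}f_{p,\ell}|<1$ make the expansions absolutely convergent and the accumulated errors $o_N(1)$ uniformly in $t$. I expect this step --- getting the direction of Hölder right against the nested structure of the $T_l$'s, and handling odd $p$ --- to be the main obstacle; the rest is bookkeeping plus the concentration estimates inherent to the combinatorial interpolation. Putting everything together yields $F_N=\mathbb{E}\Psi_0\le\mathbb{E}\Psi_K+o_N(1)=(\text{right-hand side of the theorem})+o_N(1)$.
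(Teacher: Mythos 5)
Your overall skeleton is the right one --- a Salez-type combinatorial interpolation between the edge model and a decoupled site model, with the positivity of each increment ultimately coming from conditions (\ref{cond1}) and (\ref{cond3}) --- and your observation that $T_0$ acts trivially at time $0$ is exactly the remark the paper uses ($H^\gamma_{G_N}=H_{G_N}$). But there is a genuine gap at precisely the point you yourself flag as ``the main obstacle'': you propose to run the interpolation directly on $\Psi_t=\frac1N\log T_0 Z_t$ and to prove the submartingale property by applying H\"older ``in the direction forced by $0<m_1<\dots<m_r<1$''. This does not work as stated. The step-by-step inequality (Proposition \ref{propstep}) is proved by the replica method: one writes $\log\langle\exp\theta_{p,e}\rangle=\log a-\sum_n\frac{(-b)^n}{n}\langle\cdots\rangle^n$, linearizes the $n$-th power by introducing $n$ independent replicas drawn from the \emph{same Gibbs measure}, and concludes from $\EE[(-b)^n]\ge0$ together with the polynomial inequality $x^p-pxy^{p-1}+(p-1)y^p\ge0$. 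The operator $T_0$ is a nested family of $L^{m_l}$-averages, not a Gibbs average; the increment $\log T_0 Z_{t+1}-\log T_0 Z_t$ is not of the form $\log\langle\exp\theta\rangle$ for any probability measure you control, so the replica expansion cannot even be set up, and no choice of direction for H\"older substitutes for it.

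The paper's resolution is to first prove a weighted version of the bound (Theorem \ref{thmgeneral}): one enlarges the configuration space to $\Sigma_N\times\Gamma$ with random weights $v_\gamma$ as in (\ref{gaverage}), so that $\langle\cdot\rangle_{G}$ is again an honest Gibbs average and the entire proof of Theorem \ref{thmrs} (interpolation, Proposition \ref{propstep} with $C_n$ now depending on the replica indices $\gamma_1,\dots,\gamma_n$, and the error analysis) goes through verbatim. Only then does one specialize $v_\gamma$ to Derrida--Ruelle cascades and invoke Proposition \ref{propcascade}, $\EE\log\sum_\gamma v_\gamma V(\gamma)=\EE\log T_0V$, to convert the weighted averages at the two endpoints and in the per-step credit into the $T_0$-expressions of the statement. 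In other words, the hierarchy of operators $T_l$ defined in (\ref{T}) is never differentiated or estimated directly; it is \emph{represented} as a random Gibbs measure over $\Gamma=\N^r$ so that the replica argument applies. Your proposal is missing this representation step, and without it the submartingale claim for $\Psi_t$ is unsupported. (A secondary, cosmetic difference: you pair half-edges one at a time and fold the credit into the Hamiltonian, whereas the paper interpolates on the counts $(E_p^t,S_p^t)$ of edges and sites, uses the uniformity of the augmented matching to compute exact conditional increments, and keeps the credit as a separate additive term; either bookkeeping is fine once the key inequality is available.)
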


\section{Proof of Theorem \ref{thmrs}}
\label{sec:proof}

\begin{figure}
\center\includegraphics[width=.8\textwidth]{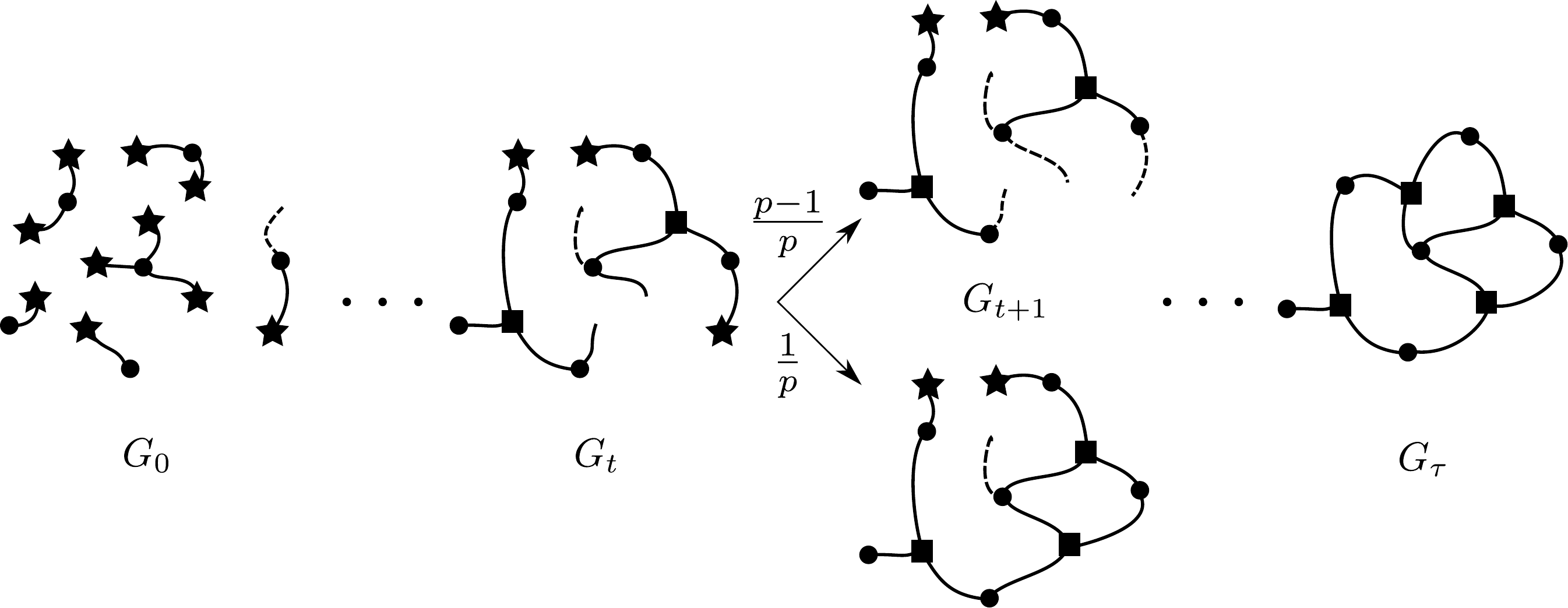}
\caption{Combinatorial interpolation on a random graph by a random walk for $p=3$. Sites are represented by 
stars, and hyperedges by squares.}
\label{fig:interpol}
\end{figure}

Let us first give an overview of the structure of the proof. The fact that we consider many $\pp$ adds generality, but for clarity,
we can assume that all edges have some fixed size $p$.
In Section \ref{subsec:extension}, we explain how we sample
 random graphs, and we extend the definition of graphs
and Hamiltonian by allowing vertices not only to be linked to hyperedges $e$ but also to sites $s$, which are just hyperedges
of cardinal 1. In the Hamiltionian, as we sum $\theta_e$'s over edges, we will sum $U_s$'s over sites. $U_s$ can be seen as the
effect of an external field on the spin linked to $s$. The free energy of a system with only sites and no edges is trivial to compute
since all the spins are decoupled. To bound the free energy associated to our original Hamiltonian, we will interpolate from a
system with sites only to a system with edges only. If the interpolation were
deterministic, we would want to remove $p$ sites and add 1 edge at every step to keep the vertex degrees unchanged. 
But to be able to control the free energy gap,
we will use in \ref{subsec:interpol} a stochastic procedure: at each step, we remove one site chosen uniformly, and with probability 
$\frac{1}{p}$ we add an edge chosen uniformly among all the edges that can be added while respecting the degree constraint. With
probability $1-\frac{1}{p}$ we add nothing. This procedure is adapted from \cite{justin} which uses it in a simpler framework.
Figure \ref{fig:interpol} illustrates this interpolation.
To control the gap at each step, we will need some inequality linking $\theta$ and $U$ that is proved in Section \ref{subsec:replica} 
using the replica method, following the lines of \cite{diluted}. 
Section \ref{subsec:approx} is devoted to the analysis of limits and error terms.

\subsection{Extension of the Graph Model and Matchings}
\label{subsec:extension}

To prove our result, we will need to extend our definition of graphs by allowing them to have (hyper-)edges as above and sites that we will denote by $\mathbf{S}_p$ for $\pp$.
More precisely, we consider graphs of the form $G=(\mathbf{V} , (\mathbf{E}_p)_{p\in\mathbf{P}},(\mathbf{S}_p)_{p\in\mathbf{P}})$ where 
as before $\mathbf{V} = \llbracket 1, N \rrbracket$ for some $N$ 
is a set of vertices, and for any $p\in\mathbf{P}$, $\mathbf{E}_p$ is a set of $p$-edges. Additionally, for each $\pp$, $\mathbf{S}_p$ is a set of $1$-edges, i.e. each $s\in\mathbf{S}_p$ contains exactly one vertex in $\mathbf{V}$ that will be denoted by $\d s$. We need to define the Hamiltonian on this new graph $G$. It will be the same as before plus a contribution for each site, this contribution depending on the type 
(i.e. some $\pp$) of the site.
Namely, with a slight abuse of notation, letting $s$ be the $i$-th element in $\mathbf{S}_p$:
\BEAS
U_{p,s}(\sigma_{\d s};\zeta) = U_p(\theta_{p,i}, x^p_{i,1},\dots, x^p_{i,p-1},\sigma_{\d s}) = U_{p,i}(\sigma_{\d s};\zeta) \mbox{ as defined in (\ref{eq:defUi}).}
\EEAS
For graphs $G=(\mathbf{V} , (\mathbf{E}_p)_{p\in\mathbf{P}},(\mathbf{S}_p)_{p\in\mathbf{P}})$, 
we extend the definition of the Hamiltonian as follows:
for $\sigma \in \Sigma_N$ and a given probability distribution $\zeta$ on $\mathbb{R}$,
\BEA
\label{eq:defHG}-H_G(\sigma)= \sum_{\pp} \left( \sum_{e\in \mathbf{E}_p} \theta_{p,e}(\sigma_{\d e}) +\sum_{s\in \mathbf{S}_p} U_{p,s}(\sigma_{\d s};\zeta)\right)
                              + \sum_{i\in \mathbf{V}}h_i(\sigma_i).
\EEA
We can now explain how we sample these graphs by matching half-edges.
We fix the degree sequence $(d_i)_{i\in\bf V}$, the number of $p$-edges $(E_p)_{p\in\bf P}$ and the number of $p$-sites $(S_p)_{p\in \bf P}$. Since we
allow \emph{unpaired half-edges} as we will see, we don't ask for any relation between $(E_p)_p$, $(S_p)_p$ and $(d_i)_i$, in particular Condition (\ref{cond:deg1}) is not required anymore for these sequences. Define

\begin{equation}
\label{defpartial}
 \mathcal{H}=\bigcup_{i\in \mathbf{V}}\{(i,1),\ldots,(i,d_i) \} \text{ the set of half-edges associated to vertices.} 
\end{equation}
For each $\pp$, there are $E_p$ $p$-edges denoted by $e^p_1, \ldots, e^p_{E_p}$, each of them having $p$ half-edges. 
We denote by $(p,\ell,1), \ldots, (p,\ell,p)$ the half-edges of $e^p_\ell$,  so that half-edges associated to hyper-edges are defined by:
\BEAS
\mathcal{I} = \bigcup_{\pp}\bigcup_{1\le \ell\le E_p}\{ (p,\ell,1),\ldots, (p,\ell,p)\}.
\EEAS
For each $\pp$, there are $S_p$ sites of type $p$, each site $s^p_\ell$ having one half-edge that we denote $(p,\ell,0)$, so that half-edges associated to sites are defined by:
\BEAS
\mathcal{J} = \bigcup_{\pp}\bigcup_{1\le \ell\le S_p}\{(p,\ell, 0)\}.
\EEAS

A (partial) matching $\mathfrak{m}$ between $\mathcal{H}$ and $\mathcal{I}\cup \mathcal{J}$ is a matching in the complete bipartite graph with bipartition $\mathcal{H}$ and $\mathcal{I}\cup \mathcal{J}$. When $\sum_{\pp}\left(pE_p+S_p\right)\leq \sum_{i\in V}d_i$, we say that $\mathfrak{m}$ is a complete matching if all vertices in $\mathcal{I}\cup \mathcal{J}$ are covered (note that this notion
is not symmetric with respect to the bipartition). 
In this case, we denote by $\Mcal=\mathcal{M}((d_i)_{i\in \mathbf{V}},(E_p)_\pp, (S_p)_\pp)$ the set of complete matchings and for $\mathfrak{m}\in \Mcal$, by $G[\mathfrak{m}]=(\mathbf{V}, (\mathbf{E}_p)_\pp,(\mathbf{S_p})_\pp)$ the (hyper-)graph defined on the set of vertices $\mathbf{V}$, where the \mbox{(hyper-)edge} $e^p_\ell$ contains the vertices matched to $(p,\ell,1),\ldots,(p,\ell,p)$ and the site $s^p_\ell$ contains the vertex matched to $(p,\ell,0)$.
Remark that the actual degree sequence of $G[\mathfrak{m}]$ is smaller than $(d_i)_{i\in\bf V}$ since some of half-edges in $\mathcal{H}$ are unpaired.

We define the free energy related to these matchings by
\begin{equation}
F((E_p)_\pp, (S_p)_\pp) = \frac{1}{|\mathcal{M}|}
    \sum_{\mathfrak{m}\in\mathcal{M}}
    \log\sum_{\sigma\in\Sigma_N}\exp\left(
    -H_{G[\mathfrak{m}]}(\sigma)
    \right).
\end{equation}
Note that $F$ also depends on $(d_i)_{i\in \mathbf{V}}$, but this sequence will be kept fixed while we will vary $(E_p)_\pp$ and $(S_p)_\pp$ in the proof. In particular, if we take $d^N =(d_i)_{i\in \mathbf{V}}$, then we have $\mathbb{E} F(E^N,0) = NF_N$ defined in (\ref{eq:defFN}) 
(the expectation is here to average over the randomness in the Hamiltonian).

Given a complete matching $\mathfrak{m}\in\mathcal{M}((d_i)_{i\in \mathbf{V}},(E_p)_\pp, (S_p)_\pp)$ and $\pp$, we can create a larger complete matching
$\mathfrak{m}'\in\mathcal{M}((d_i)_{i\in \mathbf{V}},(E_p)_\pp, (S_p)_\pp+1_p)$ (where $1_p$ is the all zero vector in $\mathbb{R}^\pp$ with a one in the $p$-th position)
by adding to $\mathfrak{m}$ a site of type $p$ as follows: pick one half-edge in $\mathcal{H}$ among those not matched in $\mathfrak{m}$, uniformly at random (provided it exists, i.e. $1+\sum_{q\in \mathbf{P}}\left(qE_q+S_q\right)\leq \sum_{i\in V}d_i$) and match it to the new site. We say that $\mathfrak{m}'$ is obtained from $\mathfrak{m}$ by a random $p$-site-pairing. 
Similarly if $p+\sum_{q\in \mathbf{P}}\left(qE_q+S_q\right)\leq \sum_{i\in V}d_i$, we can create a larger complete matching 
$\mathfrak{m}'\in\mathcal{M}((d_i)_{i\in \mathbf{V}},(E_p)_\pp+1_p, (S_p)_\pp)$
by adding to $\mathfrak{m}$ a $p$-edge as follows: 
pick $p$ half-edges in $\mathcal{H}$ among those not matched in $\mathfrak{m}$, uniformly at random and match them to the new $p$-edge. 
We say that $\mathfrak{m}'$ is obtained from
$\mathfrak{m}$ by a random $p$-edge-pairing.

\begin{lemma}
Let $\mathfrak{M}$ be uniformly distributed on $\mathcal{M}((d_i)_{i\in \mathbf{V}},(E_p)_\pp, (S_p)_\pp)$ and $\pp$. Let $d=\sum_i d_i$, $E=\sum_p pE_p$ and $S=\sum_p S_p$. We assume that $d-E-S\geq p$. Conditionally on $\mathfrak{M}$, make a random $p$-site-pairing (resp. $p$-edge-pairing), then the result $\mathfrak{M}'$ is uniformly distibuted on $\mathcal{M}((d_i)_{i\in \mathbf{V}}, (E_p)_\pp$, $ (S_p)_\pp + 1_p)$ (resp. $\mathcal{M}((d_i)_{i\in \mathbf{V}},(E_p)_\pp+1_p, (S_p)_\pp)$).
\end{lemma}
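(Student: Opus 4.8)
The plan is to show that the randomized pairing procedure produces a uniform element of the larger matching space by a counting/symmetry argument. First I would set up the combinatorics: let $M = |\mathcal{M}((d_i)_i, (E_p)_p, (S_p)_p)|$ be the size of the smaller space and $M' = |\mathcal{M}((d_i)_i, (E_p)_p, (S_p)_p + 1_p)|$ (resp.\ the analogous quantity for edge-pairing) be the size of the target. Since $d - E - S \geq p \geq 1$, the number of unmatched half-edges in $\mathcal{H}$ for any $\mathfrak{m} \in \mathcal{M}((d_i)_i,(E_p)_p,(S_p)_p)$ is exactly $d - E - S \geq 1$, so the procedure is well-defined and, for a site-pairing, each $\mathfrak{m}$ has exactly $d - E - S$ possible children $\mathfrak{m}'$, each obtained with probability $\frac{1}{d-E-S}$.

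Next I would count in the other direction: given a target matching $\mathfrak{m}' \in \mathcal{M}((d_i)_i, (E_p)_p, (S_p)_p + 1_p)$, I would identify which pairs $(\mathfrak{m}, \text{half-edge})$ can produce it. The new site $s^p_{S_p+1}$ is matched in $\mathfrak{m}'$ to exactly one half-edge $h \in \mathcal{H}$; deleting that site (and unmatching $h$) yields a unique $\mathfrak{m} \in \mathcal{M}((d_i)_i, (E_p)_p, (S_p)_p)$, and $\mathfrak{m}'$ arises from $\mathfrak{m}$ precisely by choosing the half-edge $h$. So each $\mathfrak{m}'$ has exactly one preimage, and the probability that $\mathfrak{M}' = \mathfrak{m}'$ is $\frac{1}{M} \cdot \frac{1}{d-E-S}$, which is independent of $\mathfrak{m}'$; hence $\mathfrak{M}'$ is uniform (and incidentally $M' = M(d-E-S)$). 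For the $p$-edge-pairing case the argument is the same with one bookkeeping twist: the new edge $e^p_{E_p+1}$ has $p$ \emph{ordered} half-edges $(p, E_p+1, 1), \dots, (p, E_p+1, p)$, and the procedure picks an ordered $p$-tuple of distinct unmatched half-edges (equivalently matches the $p$ labelled half-edges of the new edge one by one), so each $\mathfrak{m}$ has $(d-E-S)(d-E-S-1)\cdots(d-E-S-p+1)$ children each with equal probability, and again each target $\mathfrak{m}'$ has exactly one preimage $\mathfrak{m}$ (delete the new edge), giving uniformity.

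The only subtlety — and the step I would be most careful about — is the precise convention for what a "matching'' records for the new site/edge: whether the half-edges $(p,\ell,1),\dots,(p,\ell,p)$ of an edge are distinguishable (labelled). The paper's setup in Section~\ref{subsec:extension} does label them, so "pick $p$ half-edges uniformly at random and match them to the new $p$-edge'' must be read as picking an ordered tuple (or, equivalently, a uniformly random bijection from $\{1,\dots,p\}$ to a uniformly random $p$-subset); with that reading the forward count is $(d-E-S)_p$ falling-factorial and the backward map (delete the last edge) is a genuine bijection onto $\mathcal{M}((d_i)_i,(E_p)_p,(S_p)_p)$, so the probabilities match. If instead one used unordered $p$-sets, each $\mathfrak{m}'$ would still have a unique preimage but the forward fan-out would be $\binom{d-E-S}{p}$ — still constant, so uniformity holds either way; I would simply fix the ordered convention to stay consistent with the rest of the proof. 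I would write this up as: (i) observe well-definedness via $d-E-S \geq p$; (ii) compute the forward out-degree of each $\mathfrak{m}$ (constant); (iii) exhibit the deletion map as a well-defined function $\mathcal{M}(\dots,(S_p)_p+1_p) \to \mathcal{M}(\dots,(S_p)_p)$ and check it is the unique preimage; (iv) conclude $\PP(\mathfrak{M}' = \mathfrak{m}')$ is constant in $\mathfrak{m}'$, hence uniform.
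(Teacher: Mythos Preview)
Your argument is correct and follows the same double-counting scheme as the paper: show the forward fan-out from each $\mathfrak{m}$ is constant, show the number of preimages of each $\mathfrak{m}'$ is constant, and conclude that $\PP(\mathfrak{M}'=\mathfrak{m}')$ is independent of $\mathfrak{m}'$. One minor point worth noting: the paper states that the number of $\mathfrak{m}$ with $\mathfrak{m}\subset\mathfrak{m}'$ equals $1+\sum_{p}S_p$, whereas under the labeled-site convention set up in Section~\ref{subsec:extension} your count of exactly one preimage (delete the pair involving the new site $(p,S_p+1,0)$) is the correct one; since either number is constant in $\mathfrak{m}'$ the conclusion is unaffected, and your careful handling of the ordered/unordered ambiguity in the $p$-edge case is a nice clarification the paper leaves implicit.
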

\begin{proof}
Each $\mfrak\in \mathcal{M}((d_i)_{i\in \mathbf{V}},(E_p)_\pp, (S_p)_\pp)$ admits $d-\sum_{\pp}\left(pE_p+S_p\right)$ allowed $p$-site-pairings, each producing a distinct $\mfrak'\in \mathcal{M}((d_i)_{i\in \mathbf{V}},(E_p)_\pp$, $(S_p)_\pp+1_p)$ containing $\mfrak$. 
Hence $\PP(\mathfrak{M}'=\mfrak')$ is proportional to the number of $\mfrak\in \mathcal{M}((d_i)_{i\in \mathbf{V}},(E_p)_\pp, (S_p)_\pp)$ such that $\mfrak\subset \mfrak'$. This number is exactly $1+\sum_{\pp}S_p$, independently of $\mfrak'$. The proof is similar for the $p$-edge pairing.
\end{proof}

For a complete matching $\mfrak$, we define $\langle \cdot\rangle_{\mfrak}$ the Gibbs average on $\Sigma_N$, with respect to the Hamiltonian $H_{G[\mfrak]}$ defined by (\ref{eq:defHG}) on the random graph $G[\mfrak]$ by
$$
\langle \ind_\sigma \rangle_{\mfrak} = \frac{\exp(-H_{G[\mfrak]}(\sigma))}{\sum_{\sigma'\in\Sigma_N} \exp(-H_{G[\mfrak]}(\sigma'))}
$$
As a direct application of the previous lemma, we obtain
\begin{lemma}\label{lem:ineq}
For $\pp$, if $p+\sum_{q\in \mathbf{P}}\left(qE_q+S_q\right)\leq \sum_{i\in V}d_i$, we have:
 \begin{align*}
 \begin{split}
 F((E_p)_\pp+1_p, (S_p)_\pp)-&{}F((E_p)_\pp, (S_p)_\pp) \\ 
   &    = \frac{1}{|\Mcal|}\sum_{\mathfrak{m}\in\mathcal{M}}\EE_e\log\left\langle\exp(\theta_{p,e}(\sigma_{\partial e}))\right\rangle_{\mathfrak{m}} 
 \end{split} \\
 \begin{split}
  F((E_p)_\pp, (S_p)_\pp+1_p)-&{}F((E_p)_\pp, (S_p)_\pp) \\ 
    &    =\frac{1}{|\Mcal|}\sum_{\mathfrak{m}\in\mathcal{M}}\EE_s\log\left\langle\exp(U_{p,s}(\sigma_{\d s};\zeta))\right\rangle_{\mathfrak{m}},
  \end{split}
  \end{align*}
  where $\mathcal{M}=\mathcal{M}((d_i)_{i\in \mathbf{V}},(E_p)_\pp, (S_p)_\pp)$ and $\EE_e$ (resp. $\EE_s$) denotes the expectation with respect to the random choice of $\d e$ in a random $p$-edge-pairing (resp. $\d s$ in a random $p$-site-pairing) as well as the randomness in the Hamiltonian. \qed
\end{lemma}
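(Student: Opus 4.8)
The plan is to prove Lemma \ref{lem:ineq} by combining the previous lemma on uniform distributions with the definition of the free energy $F$ and a telescoping/averaging identity. The key observation is that $F((E_p)_\pp+1_p,(S_p)_\pp)$ is, by definition, an average over the larger matching set $\Mcal' = \Mcal((d_i)_{i\in\mathbf{V}},(E_p)_\pp+1_p,(S_p)_\pp)$ of $\log\sum_\sigma\exp(-H_{G[\mfrak']}(\sigma))$, and each such $\mfrak'$ decomposes as a base matching $\mfrak\in\Mcal$ together with the extra $p$-edge-pairing. So first I would write out the Hamiltonian decomposition
\begin{equation*}
-H_{G[\mfrak']}(\sigma) = -H_{G[\mfrak]}(\sigma) + \theta_{p,e}(\sigma_{\d e}),
\end{equation*}
where $e$ is the newly added $p$-edge and $\d e$ is determined by which $p$ unmatched half-edges of $\mfrak$ it is paired to. Exponentiating and summing over $\sigma$ gives
\begin{equation*}
\sum_\sigma\exp(-H_{G[\mfrak']}(\sigma)) = \Bigl(\sum_\sigma\exp(-H_{G[\mfrak]}(\sigma))\Bigr)\bigl\langle\exp(\theta_{p,e}(\sigma_{\d e}))\bigr\rangle_{\mfrak},
\end{equation*}
by the definition of the Gibbs average $\langle\cdot\rangle_\mfrak$. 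Taking logarithms turns the product into a sum, which is exactly the telescoping structure we want.

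Next I would invoke the previous lemma: if $\mathfrak{M}$ is uniform on $\Mcal$ and we perform a random $p$-edge-pairing to obtain $\mathfrak{M}'$, then $\mathfrak{M}'$ is uniform on $\Mcal'$. Hence the average of $\log\sum_\sigma\exp(-H_{G[\mfrak']}(\sigma))$ over $\mfrak'\in\Mcal'$ (which is $F((E_p)_\pp+1_p,(S_p)_\pp)$ up to the Hamiltonian randomness) equals the expectation over the uniform $\mathfrak{M}$ of the expectation over the random edge-pairing of the same quantity. Using the logarithmic decomposition above, this expectation splits into the term $\frac{1}{|\Mcal|}\sum_{\mfrak\in\Mcal}\log\sum_\sigma\exp(-H_{G[\mfrak]}(\sigma))$, which is precisely $F((E_p)_\pp,(S_p)_\pp)$, plus $\frac{1}{|\Mcal|}\sum_{\mfrak\in\Mcal}\EE_e\log\langle\exp(\theta_{p,e}(\sigma_{\d e}))\rangle_\mfrak$, where $\EE_e$ averages over the uniform choice of which unmatched half-edges form the new edge together with the fresh copy $\theta_{p,e}$. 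Subtracting $F((E_p)_\pp,(S_p)_\pp)$ yields the first claimed identity. The argument for the $p$-site-pairing is verbatim the same, replacing $\theta_{p,e}(\sigma_{\d e})$ by $U_{p,s}(\sigma_{\d s};\zeta)$ and using the site-pairing half of the previous lemma.

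The one point requiring a little care — and the main (mild) obstacle — is bookkeeping the expectation over the Hamiltonian randomness. The function $F$ is deterministic given all the $\theta$'s, $h$'s and $x$'s, but when we add a new edge or site we attach a \emph{fresh} i.i.d. copy $\theta_{p,e}$ (respectively a fresh $\theta_{p,i}$ and fresh $x^p_{i,\ell}$); we must check that summing over the index $\ell=E_p+1$ (resp.\ the next available site index) of these pre-sampled i.i.d.\ sequences is distributionally the same as drawing an independent copy, so that the notation $\EE_e$ (resp.\ $\EE_s$) legitimately bundles the half-edge choice with an independent Hamiltonian term. This is immediate because the i.i.d. sequences $(\theta_{p,e})_{e\in\mathbf{E}_p}$, $(\theta_{p,i})_{i\ge 1}$ and $(x^p_{i,\ell})$ are infinite and exchangeable, so exposing one more coordinate is the same as a fresh draw; and $F$ for the smaller graph does not depend on that coordinate, so it passes through the new expectation unchanged. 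Once this is noted the identities follow directly, and no estimation is needed — this lemma is purely an exact telescoping step, with the real work (bounding the increments) deferred to Section \ref{subsec:replica}.
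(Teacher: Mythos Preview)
Your proposal is correct and follows exactly the approach the paper intends: the paper states Lemma~\ref{lem:ineq} as ``a direct application of the previous lemma'' with no further argument, and your decomposition $-H_{G[\mfrak']}=-H_{G[\mfrak]}+\theta_{p,e}(\sigma_{\partial e})$ together with the uniformity lemma is precisely that direct application. Your remark on the bookkeeping of the fresh Hamiltonian coordinate is the only subtlety, and you handle it correctly.
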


\subsection{The Replica Method}
\label{subsec:replica}
We now compute an average quantity related to the right-hand term of the previous lemma which will be crucial to our proof.

\begin{proposition}[Step-by-step increment]
    \label{propstep}
Let $\mathfrak{m}\in\mathcal{M}((d_i)_{i\in \mathbf{V}}$, $(E_p)_\pp$, $(S_p)_\pp)$ a complete matching and $\pp$ such that $\sum_i d_i-\sum_q qE_q-\sum_q S_q \geq \delta>p$. Then we have, for a random $p$-edge-pairing $e$ and a random $p$-site-pairing $s$ independent of each other:
\BEAS
    \mathbb{E}
	\left(
	\frac{1}{p}\log\left\langle\exp \theta_{p,e}(\sigma_{\d e}) \right\rangle_{\mfrak} -  
      \log\left\langle \exp U_{p,s}(\sigma_{\d s};\zeta)\right\rangle_{\mfrak}
	\right)
    \le -\frac{p-1}{p} \mathbb{E}\left[\log \langle\mathcal{E}_{p}\rangle_x\right] 
                + \frac{2p\kappa}{\delta-p},
                \EEAS
                where $x$ in $\langle\mathcal{E}_{p}\rangle_x$ defined by (\ref{eq:defEx}) is a random vector with i.i.d. coordinates distributed according to $\zeta$ and the expectation $\mathbb{E}$ is with respect to the random $p$-edge-pairing and $p$-site-pairing as well as the randomness in the functions $\theta_{p,e}, U_{p,s}, \mathcal{E}_{p}$ and $x$.
    \end{proposition}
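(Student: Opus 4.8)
The plan is to expand both $\log$-Gibbs-averages using the multiplicative structure from Condition~(\ref{cond1}), and then to compare them via the "replica" (annealed-moment) expansion of the logarithm. Concretely, write $-\log(1-u)=\sum_{n\ge1}u^n/n$ for $|u|<1$: since $|b_pf_{p,1}\cdots f_{p,p}|<1$ a.s.\ and $\langle\mathcal E_p\rangle_x^-$ is a convex combination of such products, the quantities $\langle\exp\theta_{p,e}(\sigma_{\d e})\rangle_{\mfrak}/a_p-1$ and $\langle\exp U_{p,s}(\sigma_{\d s};\zeta)\rangle_{\mfrak}/a_p-1$ both lie in $(-1,1)$, so I can use the power series. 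First I would note that, by~(\ref{eq:defEx}), for a site $s$ of type $p$ attached to vertex $v$,
\[
\bigl\langle\exp U_{p,s}(\sigma_{\d s};\zeta)\bigr\rangle_{\mfrak}
= a_p\Bigl(1+b_p\,\tfrac{\langle f_{p,p}(\sigma_v)\rangle_{\mfrak}}{1}\prod_{\ell=1}^{p-1}\tfrac{\on{Av}f_{p,\ell}(\epsilon)e^{x_\ell\epsilon}}{\on{ch}(x_\ell)}\Bigr),
\]
while for a $p$-edge $e$ attached to vertices $v_1,\dots,v_p$,
\[
\bigl\langle\exp\theta_{p,e}(\sigma_{\d e})\bigr\rangle_{\mfrak}
= a_p\Bigl(1+b_p\,\bigl\langle f_{p,1}(\sigma_{v_1})\cdots f_{p,p}(\sigma_{v_p})\bigr\rangle_{\mfrak}\Bigr).
\]
Raising to the $n$-th power, multiplying by $(-1)^n/n$, summing, and taking expectations, the comparison reduces to controlling, for each $n\ge1$, the difference between $\mathbb E\bigl[(-b_p)^n\langle f_{p,1}(\sigma_{v_1})\cdots f_{p,p}(\sigma_{v_p})\rangle_{\mfrak}^n\bigr]$ and the corresponding product over $p$ independent sites.

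The key mechanism is that $\langle\cdot\rangle_{\mfrak}^n$ is a Gibbs average over $n$ replicas $\sigma^1,\dots,\sigma^n$, and the randomly-chosen endpoints $v_1,\dots,v_p$ of a fresh $p$-edge-pairing are, up to an error controlled by the number of free half-edges, $p$ independent uniform vertices. So $\mathbb E\langle f_{p,1}(\sigma^1_{v_1})\cdots\rangle_{\mfrak}^n$ factorizes approximately as $\prod_{j=1}^p \mathbb E\langle\,\cdot\,\rangle$; the error in replacing sampling-without-replacement by sampling-with-replacement is $O(p^2/\delta)$ per replica-block, hence $O(p\kappa/(\delta-p))$ after summing the series (using $|\theta_p|,|h|\le\kappa$ so the series converges geometrically with an $N$-independent rate). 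For the site side, the same endpoint $v$ is used in all $p$ of the factors $f_{p,1},\dots,f_{p,p}$, but $U_{p,s}$ bundles the external fields $x_1,\dots,x_{p-1}$, which are independent of $\mfrak$; averaging over $x$ first and using that the $\on{Av}f_{p,\ell}(\epsilon)e^{x_\ell\epsilon}/\on{ch}(x_\ell)$ have the same law as the (independent-site) Gibbs one-point averages $\langle f_{p,\ell}\rangle$ against the effective field distribution, one sees that the site contribution reproduces exactly the "$1/p$ of a split edge" term. Matching the two expansions term by term then yields the main inequality with the $\mathbb E[(-b_p)^n]\ge0$ hypothesis used to guarantee the cross-terms have the right sign, and $\mathbb E\log\langle\mathcal E_p\rangle_x$ appearing as the diagonal ($v_1=\dots=v_p$, i.e.\ all endpoints coincide) contribution that is over-counted by the factor $\tfrac1p$ versus $1$.

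The main obstacle I anticipate is making the "approximate independence of the $p$ freshly matched endpoints" quantitative with the clean error bound $2p\kappa/(\delta-p)$, while simultaneously keeping track of the replica indices: one must show that for each $n$, the discrepancy between the true joint law of $(v_1,\dots,v_p)$ under a $p$-edge-pairing and the product law costs at most $O(p^2/\delta)$ in total variation \emph{on the relevant test functions}, then multiply by the series coefficient $1/n$ and the a.s.\ bound $|b_pf_{p,1}\cdots f_{p,p}|<1$, and finally resum. The sign condition $\forall n,\ \mathbb E[(-b_p)^n]\ge0$ is exactly what lets me drop the unwanted higher-order replica-overlap contributions (they enter with a favorable sign), turning an identity-up-to-error into the stated one-sided inequality; this is where following the argument of~\cite{diluted} closely is essential, the only new ingredient being that here the edge is built by a random pairing rather than by i.i.d.\ vertex choices, which is precisely what the $2p\kappa/(\delta-p)$ term absorbs.
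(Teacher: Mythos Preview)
Your setup is right: power-series expansion of $\log(1+\cdot)$ via Condition~(\ref{cond1}), introduction of replicas, and replacing the without-replacement law of the $p$ freshly paired half-edges by the product law at cost $O(p^2/\chi)\le O(p^2/(\delta-p))$ is exactly how the paper begins, and Lemma~\ref{boundZ} is the quantitative version of the total-variation bound you describe. So far so good.

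The gap is in your description of how the three expanded terms combine. Two of your claims are false and would derail the argument. First, the quantities $\on{Av}f_{p,\ell}(\epsilon)e^{x_\ell\epsilon}/\on{ch}(x_\ell)$ do \emph{not} have the same law as the Gibbs one-point averages $\langle f_{p,\ell}(\sigma_v)\rangle_{\mfrak}$: the distribution $\zeta$ is arbitrary and has no reason to match the Gibbs marginals, so the site contribution does \emph{not} reproduce ``$1/p$ of a split edge''. Second, $\mathbb{E}\log\langle\mathcal{E}_p\rangle_x$ is \emph{not} a diagonal (coinciding-endpoint) contribution; it is a purely external quantity in the $x$'s, independent of $\mfrak$. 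What actually happens after the expansion is that the $n$-th term of the edge, site, and $\langle\mathcal E_p\rangle_x$ pieces give respectively $\langle B_n^p\rangle_\mfrak$, $\langle B_n\rangle_\mfrak C_n^{p-1}$, and $C_n^p$, where
\[
B_n=\mathbb{E}_0\sum_{i\in\mathbf{V}}\frac{c_i}{\chi}\prod_{\ell=1}^n f_p(\sigma_i^\ell),
\qquad
C_n=\mathbb{E}_0\Bigl(\frac{\on{Av}f_p(\epsilon)e^{\epsilon x}}{\on{ch}(x)}\Bigr)^n,
\]
with $B_n$ a (replica-dependent) Gibbs quantity and $C_n$ a $\zeta$-dependent constant, a priori unrelated. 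The combination to control is
\[
-\sum_{n\ge1}\frac{\mathbb{E}_0[(-b)^n]}{n}\bigl\langle B_n^p - pB_nC_n^{p-1} + (p-1)C_n^p\bigr\rangle_\mfrak,
\]
and the one-sided inequality follows from the elementary pointwise bound $x^p-pxy^{p-1}+(p-1)y^p\ge0$, valid when $p$ is even or when $x,y\ge0$. This is where Condition~(\ref{cond3}) enters (you never invoke it), ensuring $B_n,C_n\ge0$ when $p$ is odd; the role of $\mathbb{E}[(-b_p)^n]\ge0$ is then simply to give each term in the series the right sign, not to ``drop replica-overlap contributions''. Without this polynomial inequality your scheme of matching terms cannot close, because the edge and site expansions involve genuinely different objects $B_n$ and $C_n$.
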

\begin{proof}
We first deal with the randomness of the $p$-edge-pairing denoted by $\mathbb{E}_e$.
For $i\in\mathbf{V}$, let $c_i$ be the number of half-edges of $\mathcal{H}$ linked to $i$ that are free in $\mfrak$ and $\chi=\sum_{i\in\mathbf{V}}c_i \ge \delta$. Then for any functional $\phi:\{\pm 1\}^p \to \R$, 
   \BEA
\nonumber    \mathbb{E}_e[\phi(\sigma_{\d e})] &=& \sum_{i_1,\ldots,i_{p}\in\mathbf{V}} 
            \mathbb{P}(\d e=(i_1,\ldots,i_{p}))\phi(\sigma_{i_1},\ldots,\sigma_{i_{p}}) \\
\nonumber   &=& \sum_{i_1,\ldots,i_{p}\in\mathbf{V}} 
    \frac{c_{i_1}}{\chi} \times
    \frac{c_{i_2}-1_{i_1=i_2}}{\chi-1} \times \ldots \\ && \quad \quad \quad \quad \quad  \times
    \frac{c_{i_{p}}-1_{i_1=i_{p}}-\ldots-1_{i_{p-1}=i_{p}}}{\chi-p+1} 
            \phi(\sigma_{i_1},\ldots,\sigma_{i_{p}}) \\
 \label{defZ}  &=& \sum_{i_1,\ldots,i_{p}\in\mathbf{V}} 
        \left( \frac{c_{i_1}\ldots c_{i_{p}}}{\chi^{p}} + Z_{i_1,\ldots,i_{p}}\right)
            \phi(\sigma_{i_1},\ldots,\sigma_{i_{p}}) 
\EEA
    where $Z_{i_1\ldots i_{p}}$ depends on $c_{i_1},\ldots, c_{i_{p}}, \chi$, and 
   $ \sum\limits_{i_1,\ldots,i_{p}\in\mathbf{V}}|Z_{i_1\ldots i_{p}}| 
    \le \frac{2p^2}{\chi-p}$ as proved in Lemma
   \ref{boundZ} below.

    Then by the bound of Condition (\ref{cond2}) applied to $\theta_{p,e}$, 
    $$
    \left| \mathbb{E}_e\log\left\langle\exp(\theta_{p,e}(\sigma_{\d e}))\right\rangle_\mfrak 
   - \sum_{i_1,\ldots,i_{p}\in\mathbf{V}} 
        \frac{c_{i_1}\ldots c_{i_{p}}}{\chi^{p}} 
        \log\left\langle\exp(\theta_{p,e}(\sigma_{\d e}))\right\rangle_\mfrak \right|
    \le \frac{2p^2 \kappa}{\chi-p}.
    $$
    In order to obtain our claim, we need to prove the following inequality~:
    \begin{multline}
    \label{fundineq}
    \sum_{(i_1,\ldots, i_p)\in\mathbf{V}^{p}} 
        \frac{c_{i_1}\ldots c_{i_{p}}}{\chi^{p}} 
    \mathbb{E}_0\log\left\langle\exp(\theta_{p,e}(\sigma_{i_1},
                            \ldots,\sigma_{i_{p}}))\right\rangle_\mfrak \\ - 
    p\sum_{i\in\mathbf{V}}\frac{c_i}{\chi}
      \mathbb{E}_0\log\left\langle \exp(U_{p,s}(\sigma_i;\zeta))\right\rangle_\mfrak
    -(1-p) \mathbb{E}_0\log \langle\mathcal{E}_p\rangle_x 
    \le 0,
    \end{multline}
    where $\mathbb{E}_0$ is the expectation with respect to the randomness in the functions $\theta_p,U_p,\Ecal_p$ and $x$, i.e. under assumption (\ref{cond1}), the randomness in $a_p$, $b_p$, $f_{p,1},\ldots, f_{p,p}$ and $x$ a vector with i.i.d. coordinates with distribution $\zeta$ (independent of the rest). Note that $\mathbb{E}_0$ is independent of the randomness in $\langle\cdot\rangle_\mfrak$.

For the rest of the proof, we will omit the index $p$ in $a_p$, $b_p$ and $f_{p,i}$. As in \cite{diluted}, we introduce replicas $\sigma^1,\ldots, \sigma^{\ell},\ldots$ which are independent copies of $\sigma\in \Sigma_N$ with distribution given by the Gibbs distribution with Hamiltonian $H_{G[\mfrak]}$ defined by (\ref{eq:defHG}).

Using Condition (\ref{cond1}), we have:
    \begin{equation*}
    \begin{split}
    \log\left\langle\exp(\theta_{p,e}(\sigma_{i_1},
                            \ldots,\sigma_{i_{p}}))\right\rangle_\mfrak &= 
    \log(a)-\sum_{n=1}^{+\infty}\frac{(-b)^n}{n}
        \left\langle f_1(\sigma_{i_1})\ldots f_p(\sigma_{i_p})\right\rangle^n_\mfrak \\
    & =
    \log(a)-\sum_{n=1}^{+\infty}\frac{(-b)^n}{n}
        \left\langle \prod_{\ell=1}^n f_1(\sigma^\ell_{i_1})\ldots f_p(\sigma^\ell_{i_p})\right\rangle_\mfrak \\
    \end{split}
    \end{equation*}

\noindent
Then we can define
$$
A_{k,n} = \sum_{i\in\mathbf{V}}\frac{c_i}{\chi}\prod_{\ell=1}^n f_k(\sigma_i^\ell) 
\text{\quad and \quad} 
B_n = \mathbb{E}_0A_{k,n}
$$
such that
\BEAS
\mathbb{E}_0\sum_{(i_1,\dots,i_p)\in\mathbf{V}^p}
        \frac{c_{i_1}\ldots c_{i_p}}{\chi^{p}} 
          \left\langle \prod_{\ell=1}^n f_1(\sigma^\ell_{i_1})\ldots f_p(\sigma^\ell_{i_p})\right\rangle_\mfrak &=&
\mathbb{E}_0\left\langle \prod_{k=1}^p A_{k,n} \right\rangle_\mfrak\\
=
\left\langle\mathbb{E}_0 \prod_{k=1}^p A_{k,n} \right\rangle_\mfrak
&=&
\left\langle B_n^p \right\rangle_\mfrak
\EEAS
Hence we proved that:
\begin{multline}
\label{eq:1}\sum_{(i_1,\ldots, i_p)\in\mathbf{V}^{p}} 
        \frac{c_{i_1}\ldots c_{i_{p}}}{\chi^{p}} 
    \mathbb{E}_0\log\left\langle\exp(\theta_{p,e}(\sigma_{i_1},
                            \ldots,\sigma_{i_{p}}))\right\rangle_\mfrak \\
                         =\mathbb{E}_0[\log a]- \sum_{n=1}^\infty\frac{\mathbb{E}_0\left[(-b)^n\right]}{n}\left\langle B_n^p \right\rangle_\mfrak.
\end{multline}

We do a similar analysis for the second term. Namely, we have for $x_1,\dots,x_{l-1}$ i.i.d. with distribution $\zeta$,
\BEAS
\exp U_{p,s}(\sigma_i;\zeta) &=& \left\langle \Ecal_p\right\rangle_x^-(\sigma_i)
= a\left( 1+bf_p(\sigma_i)\prod_{1\le l\le p-1} \frac{\on{Av}f_l(\epsilon)\exp(\epsilon x_l)}{\on{ch} (x_l)}\right).
\EEAS
Hence, we have
\BEAS
\log \langle \exp U_{p,s}(\sigma_i;\zeta)\rangle_\mfrak &=& 
\log a -\sum_{n=1}^\infty \frac{(-b)^n}{n}\left(\left\langle f_p(\sigma_i)\right\rangle_\mfrak \prod_{1\le l\le p-1} \frac{\on{Av}f_l(\epsilon)\exp(\epsilon x_l)}{\on{ch} (x_l)}\right)^n.
\EEAS
Introducing replicas as above and taking expectation with respect to $\mathbb{E}_0$, we have with $C_n=\mathbb{E}_0 \left(\frac{\on{Av}f_l(\epsilon)\exp(\epsilon x_l)}{\on{ch}(x_l)}\right)^n$,
\BEAS
\mathbb{E}_0\log \langle \exp U_{p,s}(\sigma_i;\zeta)\rangle_\mfrak &=& \mathbb{E}_0[\log a]-\sum_{n=1}^\infty \frac{\mathbb{E}_0\left[(-b)^n\right]C_n^{p-1}}{n}\left\langle \mathbb{E}_0\left[f_p(\sigma^1_i)\dots f_p(\sigma^n_i)\right]\right\rangle_\mfrak,
\EEAS
so that, we get
\BEA
\label{eq:2}\sum_{i\in\mathbf{V}}\frac{c_i}{\chi}
      \mathbb{E}_0\log\left\langle \exp(U_{p,s}(\sigma_i ; \zeta))\right\rangle_\mfrak
=
\mathbb{E}_0[\log a ]- \sum_{n=1}^{+\infty} \frac{\mathbb{E}_0\left[(-b)^n\right]}{n}\langle B_n\rangle_\mfrak C_n^{p-1}
\EEA
Finally, in the same manner, we obtain
\BEA
\label{eq:3}\mathbb{E}_0\log \langle\mathcal{E}_{p}\rangle_x = 
\mathbb{E}_0[\log a ]- \sum_{n=1}^{+\infty} \frac{\mathbb{E}_0\left[(-b)^n\right]}{n} C_n^{p}
\EEA
Using (\ref{eq:1}), (\ref{eq:2}) and (\ref{eq:3}), we see that Inequality (\ref{fundineq}) is equivalent to showing
\begin{equation}
- \sum_{n=1}^{+\infty} \frac{\mathbb{E}_0\left[(-b)^n\right]}{n}  
   \left\langle 
    B_n^p 
    -p \langle B_n\rangle_\mfrak (C_n)^{p-1}
    +(p - 1) (C_n)^{p}
    \right\rangle_\mfrak
    \le 0
\end{equation}
Under Condition (\ref{cond3}), we have $p$ even or $B_n, C_n \ge 0$, and the polynomial
$x^p-pxy^{p-1}+(p-1)y^p$ is always non-negative if $p$ is even or $x,y\ge 0$. 
\end{proof}
    \begin{lemma}
    \label{boundZ}
    Let $Z$ to be defined as in Equation (\ref{defZ}), then
    $$
   \sum_{i_1,\ldots,i_{p}\in\mathbf{V}} |Z_{i_1\ldots i_{p}}| 
        \le 2\sum_{k=1}^{p-1}\frac{k}{\chi-k}
        \le \frac{2p^2}{\chi-p}
    $$
    \end{lemma}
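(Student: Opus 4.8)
The plan is to read $Z_{i_1\ldots i_p}$ as the difference between two probability distributions on $\mathbf{V}^p$. Let $P(i_1,\dots,i_p)=\prod_{\ell=1}^p p_\ell$ be the law of the sequence of vertices hit when $p$ of the $\chi$ free half-edges are drawn \emph{without} replacement, where $n_\ell=\#\{j<\ell:\ i_j=i_\ell\}$ and $p_\ell=\frac{c_{i_\ell}-n_\ell}{\chi-\ell+1}$, and let $Q(i_1,\dots,i_p)=\prod_{\ell=1}^p q_\ell$ with $q_\ell=c_{i_\ell}/\chi$ be the ``with replacement'' product law. By the definition (\ref{defZ}) one has $Z_{i_1\ldots i_p}=P(i_1,\dots,i_p)-Q(i_1,\dots,i_p)$, so the claim is a quantitative ``sampling with vs.\ without replacement'' bound.

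First I would apply the telescoping identity $\prod_\ell p_\ell-\prod_\ell q_\ell=\sum_{\ell=1}^p\bigl(\prod_{j<\ell}p_j\bigr)(p_\ell-q_\ell)\bigl(\prod_{j>\ell}q_j\bigr)$, take absolute values (all $p_j,q_j\ge0$), and sum over $(i_1,\dots,i_p)\in\mathbf{V}^p$. In the $\ell$-th term, summing the trailing factor over $i_{\ell+1},\dots,i_p$ contributes a factor $1$ since each marginal $\sum_{i_j}q_j=1$; this reduces the $\ell$-th term to $\sum_{i_1,\dots,i_\ell}\bigl(\prod_{j<\ell}p_j\bigr)|p_\ell-q_\ell|$.

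The heart of the argument is the inner sum over $i_\ell$. Writing $p_\ell-q_\ell=\frac{(\ell-1)c_{i_\ell}-\chi n_\ell}{\chi(\chi-\ell+1)}$ and using $|a-b|\le a+b$ for $a,b\ge0$ gives $\sum_{i_\ell}|p_\ell-q_\ell|\le\frac{(\ell-1)\sum_{i_\ell}c_{i_\ell}+\chi\sum_{i_\ell}n_\ell}{\chi(\chi-\ell+1)}=\frac{2(\ell-1)}{\chi-\ell+1}$, where I used $\sum_{i_\ell}c_{i_\ell}=\chi$ and $\sum_{i_\ell}n_\ell=\ell-1$. The crucial point is that this estimate is \emph{uniform in the previously drawn $(i_1,\dots,i_{\ell-1})$}, so summing out the remaining weights $\prod_{j<\ell}p_j$ — which, being the law of the first $\ell-1$ draws, add up to $1$ — leaves exactly $\frac{2(\ell-1)}{\chi-\ell+1}$ for the $\ell$-th term. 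Adding over $\ell=1,\dots,p$ (the $\ell=1$ term vanishing) gives $\sum_{i_1,\dots,i_p}|Z_{i_1\ldots i_p}|\le2\sum_{k=1}^{p-1}\frac{k}{\chi-k}$, and then bounding each denominator below by $\chi-p$ and using $\sum_{k=1}^{p-1}k=\frac{p(p-1)}{2}$ yields the final $\le\frac{p(p-1)}{\chi-p}\le\frac{2p^2}{\chi-p}$.

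There is no serious obstacle; the one point to be careful about is exactly the uniformity noted above — that the per-coordinate bound on $\sum_{i_\ell}|p_\ell-q_\ell|$ is independent of the history $(i_1,\dots,i_{\ell-1})$, which is what makes the telescoping/hybrid sum collapse to $p$ elementary terms instead of a nested expression.
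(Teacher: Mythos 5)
Your proof is correct and is essentially the paper's argument: the paper proves the same bound by induction on $p$, peeling off the last coordinate via the identity $\prod q_j-\prod p_j=q_p\bigl(\prod_{j<p}q_j-\prod_{j<p}p_j\bigr)+(q_p-p_p)\prod_{j<p}p_j$, which when unrolled is exactly your telescoping hybrid sum, and it bounds the increment by the same quantity $\frac{2(\ell-1)}{\chi-\ell+1}$ using $\sum_i c_i=\chi$ and $\sum_{i_\ell}\delta_\ell=\ell-1$. The only difference is presentational (explicit hybrid sum versus recursion), so nothing further is needed.
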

    \begin{proof}
    Keeping light notation, let $\delta_k$ be an alias for $1_{i_1=i_k}+\ldots+1_{i_{k-1=i_k}}$ (hence $\delta_1=0$). 
    For any $p$, we want to find some bound $C_p$ such that
    $$
   \sum_{i_1,\ldots,i_{p}\in\mathbf{V}} |Z_{i_1\ldots i_{p}}| 
   = 
   \sum_{i_1,\ldots,i_{p}\in\mathbf{V}} \left|
   \frac{c_{i_1}\ldots c_{i_p}}{\chi^p} - \frac{(c_{i_1}-\delta_1)\ldots(c_{i_p}-\delta_p)}{\chi\ldots(\chi-(p-1))}
   \right| \le C_p
    $$
    We proceed by induction on $p$. It is trivial that $C_1=0$ works for $p=1$. And for $p>1$
 \BEAS
   &&\sum_{i_1,\ldots,i_{p}\in\mathbf{V}} |Z_{i_1\ldots i_{p}}|\le
   \sum_{i_1,\ldots,i_{p}\in\mathbf{V}} 
   \frac{c_{i_p}}{\chi} \left|
   \frac{c_{i_1}\ldots c_{i_{p-1}}}{\chi^{p-1}} - \frac{(c_{i_1}-\delta_1)\ldots(c_{i_{p-1}}-\delta_{p-1})}{\chi\ldots(\chi-(p-2))}
   \right| \\
   && \qquad \qquad \qquad \qquad \qquad \qquad +
   \left|\frac{c_{i_p}-\delta_p}{\chi-(p-1)}-\frac{c_{i_p}}{\chi}
   \right|\frac{(c_{i_1}-\delta_1)\ldots(c_{i_{p-1}}-\delta_{p-1})}{\chi\ldots(\chi-(p-2))}
   \\
   &\le&
   \sum_{i_p\in\mathbf{V}} \frac{c_{i_p}}{\chi} C_{p-1} +
   \sum_{i_1,\ldots,i_{p-1}\in\mathbf{V}} \frac{(c_{i_1}-\delta_1)\ldots(c_{i_{p-1}}-\delta_{p-1})}{\chi\ldots(\chi-(p-2))}
   \left|\sum_{i_p\in\mathbf{V}} \frac{(p-1)c_{i_p}-\chi\delta_p}{\chi(\chi-(p-1))} \right|
   \\
   &\le&
   C_{p-1} +
   \sum_{i_1,\ldots,i_{p-1}\in\mathbf{V}} \frac{(c_{i_1}-\delta_1)\ldots(c_{i_{p-1}}-\delta_{p-1})}{\chi\ldots(\chi-(p-2))}
   \left( \frac{p-1}{\chi-(p-1)} + \frac{p-1}{\chi-(p-1)} \right)
   \\
   &\le&
    C_{p-1} + 2\frac{p-1}{\chi-(p-1)}
\EEAS
    \end{proof}

\subsection{Graph Interpolation by Random Walks}
\label{subsec:interpol}

We now describe the interpolation scheme leading to the proof of Theorem \ref{thmrs}.
Fix for now a set of parameters $((E_p)_\pp, (S_p)_\pp)$.
We will conduct the interpolation coordinate by coordinate. 
Define $\mathbf{Q}\subset\mathbf{P}$ as $\mathbf{Q}=\{\pp ~|~ S_p\ge \max(15,2p^2)\}$,
fix $q\in\mathbf{Q}$ and suppose
$E_q=0$.
Let $(X_k)_{k\in\mathbb{N}}$ be a sequence of i.i.d. random variables with $\mathbb{P}(X_0=1)=1-\mathbb{P}(X_0=0)=\frac{1}{q}$,
$\mathcal{F}_t$ its natural filtration,
and define
\BEA
\label{defwalk}
\forall t \in \llbracket 0, \tau \rrbracket, 
E_q^t = \sum_{k=1}^t X_k, \quad
S_q^t = \tau - t, \quad (E_p^t, S_p^t)=(E_p, S_p) \text{ for } p\neq q
\EEA
where  $\tau = \tau_q = S_q - 2\delta$ for $\delta = \delta_q$ to be fixed later.

We define the walk of occupied sites, $C_t=qE^t_q+S^t_q$ which is a martingale with mean $\tau$, and 
the stopping time
$$T=\inf\{t\ge 0 ~|~ |C_t-\tau| \ge \delta\}=\inf\{t\ge 0 ~|~ |C_t-\mathbb{E}[C_t]| \ge \delta\}.$$
Finally we define the stopped interpolation by~:
$$
I_t = F((E_p^{t\land T})_\pp,(S_p^{t\land T})_\pp) 
$$
Then for  $t\in\llbracket 0,\tau-1 \rrbracket$, we have 
\begin{align*}
\mathbb{E}[I_{t+1}-I_t~|~ \mathcal{F}_t] =& 
	\ind_{T > t}
	\left(\frac{1}{q}F((E^t_p)_p+1_q,(S^t_p)_p-1_q)\right. \\
&  \quad       +\left.\frac{q-1}{q}F((E^t_p)_p, (S^t_p)_p-1_q) - F((E^t_p)_p, (S^t_p)_p)\right) \\
   =&
	\ind_{T > t}
	\left(\frac{1}{q}\left(F((E^t_p)_p+1_q,(S^t_p)_p-1_q)-F((E^t_p)_p, (S^t_p)_p-1_q)\right) \right.\\
&  \quad  -\left. \frac{q}{q} \left(F((E^t_p)_p, (S^t_p)_p)-F((E^t_p)_p, (S^t_p)_p-1_q)\right)\right) 
\end{align*}
From Lemma \ref{lem:ineq} and Proposition \ref{propstep}, we deduce that for $t\in\llbracket 0,\tau-1 \rrbracket$,
\BEA
\label{eq:ineq}\mathbb{E}I_{t+1}-\mathbb{E}I_t \le \mathbb{P}(T>t)\left(
     -\frac{q-1}{q} \mathbb{E}\log \langle\mathcal{E}_q\rangle_x 
                + \frac{2q\kappa}{\delta-q}
\right).
\EEA

    \begin{proposition}[Ends of the walk]
    \label{propend}
Define for $p\neq q$, $E'_p=E_p$, $S'_p=S_p$, and 
$E'_q=\left\lfloor\frac{ S_q}{q}\right\rfloor$, $S'_q=0$.
Then
\begin{equation}
\begin{split}
\mathbb{E}|F((E_p)_{\pp}, (S_p)_\pp)-I_0| &\le 2\kappa \delta \\
\mathbb{E}|F((E'_p)_{\pp}, (S'_p)_\pp)-I_\tau| &\le 
\kappa \left(6 S_q\exp\left(\frac{-\delta^2}{2\tau q^2}\right) + 3\frac{\delta}{q}+1\right) \\
\end{split}
\end{equation}
    \end{proposition}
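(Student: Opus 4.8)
The plan is to control the two endpoints of the interpolation walk separately, both times by comparing the stopped process to the "ideal" deterministic process and estimating the error from the stopping time $T$ and from the discrepancy between $C_t$ and $\tau$. For the first bound, note that $I_0 = F((E_p^{0\wedge T})_\pp,(S_p^{0\wedge T})_\pp)$, and since $0\wedge T = 0$ we have $E_q^0 = 0$ and $S_q^0 = \tau = S_q - 2\delta$, while the left-hand configuration has $S_q$ sites of type $q$. So $F((E_p)_\pp,(S_p)_\pp)$ and $I_0$ differ exactly by adding $2\delta$ sites of type $q$ (with all other coordinates fixed). Each single $q$-site addition changes $F$ by at most $\kappa$ in absolute value: indeed, by Lemma~\ref{lem:ineq} the increment equals $\frac{1}{|\Mcal|}\sum_\mfrak \EE_s\log\langle \exp(U_{q,s}(\sigma_{\d s};\zeta))\rangle_\mfrak$, and $|U_{q,s}| = |\log\langle\Ecal_q\rangle^-_x(\sigma)| \le \kappa$ by Condition~(\ref{cond2}) applied to $\theta_q$ (since $\langle\Ecal_q\rangle^-_x(\sigma)$ is an average of $\exp\theta_q$ values). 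Telescoping over the $2\delta$ additions and using the triangle inequality gives $\mathbb{E}|F((E_p)_\pp,(S_p)_\pp) - I_0| \le 2\kappa\delta$.

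For the second bound, I would split according to whether the walk was stopped early. On the event $\{T > \tau\}$, we have $I_\tau = F((E_p^\tau)_\pp,(S_p^\tau)_\pp)$ with $S_q^\tau = 0$ and $E_q^\tau = \sum_{k=1}^\tau X_k$, a random number; the target configuration has $E'_q = \lfloor S_q/q\rfloor$ and $S'_q = 0$. Here one needs two sub-estimates: first, $|E_q^\tau - \lfloor S_q/q\rfloor|$ is typically small — on $\{T>\tau\}$ one has $|C_\tau - \tau| < \delta$, i.e. $|qE_q^\tau + S_q^\tau - \tau| < \delta$, so $|qE_q^\tau - \tau| < \delta$ (as $S_q^\tau = 0$), giving $|E_q^\tau - \tau/q| < \delta/q$; combined with $|\tau/q - \lfloor S_q/q\rfloor| = |(S_q - 2\delta)/q - \lfloor S_q/q\rfloor| \le 2\delta/q + 1$ we get $|E_q^\tau - E'_q| \le 3\delta/q + 1$. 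Each $q$-edge addition changes $F$ by at most $p\kappa = q\kappa$ in absolute value, again by Lemma~\ref{lem:ineq} and Condition~(\ref{cond2}) on $\theta_{q,e}$ (here the factor is $q\kappa$ rather than $\kappa$, but in fact a single $q$-edge with $|\theta_q|\le\kappa$ gives increment bounded by $\kappa$; I would use whichever constant is clean — $\kappa$ suffices), so on $\{T>\tau\}$ the discrepancy is at most $\kappa(3\delta/q + 1)$. Second, on the complementary event $\{T\le\tau\}$, the process $I_t$ is frozen at $I_T$ and could be far from the target; but since every increment of $F$ under a single site- or edge-operation is bounded by $\kappa$, and the configurations at $I_T$ and at $(E'_p,S'_p)$ differ by at most $O(S_q)$ operations, we have $|F((E'_p)_\pp,(S'_p)_\pp) - I_\tau| \le 3\kappa S_q$ deterministically on this event (the constant coming from bounding the total number of sites/edges involved, $S_q + E'_q \le 2S_q$, times a small slack). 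Then $\mathbb{E}[\,|F((E'_p)_\pp,(S'_p)_\pp) - I_\tau|\,] \le \kappa(3\delta/q + 1) + 3\kappa S_q\,\mathbb{P}(T\le\tau)$.

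The remaining ingredient is the tail bound $\mathbb{P}(T\le\tau) \le 6\exp(-\delta^2/(2\tau q^2))$. Since $C_t = qE_q^t + S_q^t = q\sum_{k=1}^t X_k + (\tau - t)$ and $\mathbb{E}[qX_k - 1] = q\cdot\frac1q - 1 = 0$, the process $C_t - \tau = \sum_{k=1}^t(qX_k - 1)$ is a martingale with bounded increments: $|qX_k - 1| \le q - 1 < q$. By the Azuma–Hoeffding inequality applied to this martingale up to time $\tau$, for each fixed $t\le\tau$, $\mathbb{P}(|C_t - \tau| \ge \delta) \le 2\exp(-\delta^2/(2t q^2)) \le 2\exp(-\delta^2/(2\tau q^2))$; a maximal version (e.g. Doob plus Azuma, or applying Azuma to the stopped martingale) gives $\mathbb{P}(T\le\tau) = \mathbb{P}(\max_{t\le\tau}|C_t-\tau|\ge\delta) \le 2\exp(-\delta^2/(2\tau q^2))$, and I would absorb constants to land at the stated $6S_q\exp(-\delta^2/(2\tau q^2))$ bound (noting $S_q\ge 1$ makes the constant harmless). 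Plugging this into the previous line yields $\mathbb{E}|F((E'_p)_\pp,(S'_p)_\pp) - I_\tau| \le \kappa(6S_q\exp(-\delta^2/(2\tau q^2)) + 3\delta/q + 1)$, as claimed.

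The main obstacle is the bookkeeping in the second estimate: one must carefully separate the "good" event, where the only error is the $O(\delta/q)$ rounding gap between the random endpoint $E_q^\tau$ and the deterministic target $\lfloor S_q/q\rfloor$, from the "bad" event $\{T\le\tau\}$, where the frozen process can be a macroscopic distance from the target and must be crudely bounded by $O(\kappa S_q)$ and then suppressed by the exponentially small probability from Azuma. Getting the constants to match the exact form stated (the $6S_q$, the $3\delta/q + 1$) is routine once the structure is in place; the conceptual content is entirely in (i) the per-operation Lipschitz bound $\kappa$ from Condition~(\ref{cond2}), (ii) the identity $|E_q^\tau - \tau/q| < \delta/q$ on $\{T>\tau\}$, and (iii) the Azuma tail for the bounded-increment martingale $C_t - \tau$.
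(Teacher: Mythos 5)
Your proposal is correct and follows essentially the same route as the paper: a per-operation Lipschitz bound of $\kappa$ on $F$ derived from Lemma~\ref{lem:ineq} and Condition~(\ref{cond2}), the split of the terminal discrepancy into the event $\{T>\tau\}$ (where $|E_q^\tau-\lfloor S_q/q\rfloor|\le 3\delta/q+1$) and the stopped event (crudely bounded by $3\kappa S_q$ and suppressed by the Azuma--Hoeffding tail for the martingale $C_t-\tau$). The constants you obtain match those in the statement.
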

    \begin{proof}
Note that a consequence of Condition (\ref{cond2}) is that 
$U_p(\theta_p,x_1,\ldots,x_{p-1},\sigma)$ is also bounded by $\kappa$.
Thus we easily deduce the Lipschitz property from Lemma \ref{lem:ineq}~:
\begin{multline}
\label{flip}
|F((E_p)_\pp, (S_p)_\pp)-
F((E'_p)_\pp, (S'_p)_\pp)| \\ \le
 \kappa \sum_\pp |E_p-E'_p|+|S_p-S'_p| 
\end{multline}
Hence, we have
$$
|F((E_p)_\pp, (S_p)_\pp)-I_0| \le \kappa \sum_\pp|E_p-E^0_p| + |S_p-S^0_p| 
\le 2 \kappa \delta
$$
Since $(C_t)$ is a martingale with increments bounded by $q-1\le q$, we have by Azuma-Hoeffding inequality,
\begin{equation}
\label{azuma}
\mathbb{P}(T\le t) \le 2\exp\left(\frac{-\delta^2}{2tq^2}\right)
\end{equation}
Again by the Lipschitz property (\ref{flip}), 
\begin{equation}
\begin{split}
\mathbb{E}|F((E'_p)_\pp,& (S'_p)_\pp)-I_\tau| \le 
    \kappa \mathbb{E}\left(|E'_q-E^{\tau \land T}_q|+|S'_q-S^{\tau\land T}_q| \right) \\
&\le \kappa  \mathbb{E} \left( \left|E^{\tau \land T}_q - \left\lfloor\frac{S_q}{q}\right\rfloor \right |
    + (\tau-T)\ind_{T<\tau} \right) \\
&\le \kappa \mathbb{E}\left(\ind_{T<\tau}S_q\left(1+\frac{1}{q}\right)  + \ind_{T\ge \tau}\left(3\frac{\delta}{q}+1\right) + \ind_{T<\tau} S_q\right) \\
&\le \kappa \left(S_q\frac{2q+1}{q} 2\exp\left(\frac{-\delta^2}{2\tau q^2}\right) + 3\frac{\delta}{q}+1\right) 
    \text{ and } \frac{2q+1}{q} \le 3
\end{split}
\end{equation} 
    \end{proof}

We are now ready to finish the proof of Theorem \ref{thmrs}. Using Proposition \ref{propend}
 and adding the inequalities (\ref{eq:ineq}) for $t=0\ldots \tau$ we have~:
\begin{equation}
\mathbb{E}F((E'_p), (S'_p))-
    \mathbb{E}F((E_p), (S_p)) \le - S_q
        \frac{q-1}{q} \mathbb{E}\log \langle\mathcal{E}_q\rangle_x 
        + \Delta_q
\end{equation}
where the error term is
\BEAS
\Delta_q&=&2\kappa \delta +
 \kappa \left(6 S_q\exp\left(\frac{-\delta^2}{2\tau q^2}\right) + 3\frac{\delta}{q}+1\right) \\
 &&+ \sum_{t=0}^{\tau-1}
     -\mathbb{P}(T\le t)\frac{q-1}{q} \mathbb{E}\log \langle\mathcal{E}_q\rangle_x 
                + \mathbb{P}(T>t) \frac{2 q\kappa}{\delta-q}  \\
 &&+ (S_q-\tau)\frac{q-1}{q} \mathbb{E}\log \langle\mathcal{E}_q\rangle_x 
\EEAS
By using Condition (\ref{cond2}) on $\log\langle\mathcal{E}_q\rangle_x$ and Equation (\ref{azuma}), we find
\BEAS
|\Delta_q|&\le& 
 \kappa \left(
    7 S_q \exp\left(\frac{-\delta^2}{2\tau q^2}\right) + 3\frac{\delta}{q}+1
        +2\delta
        + \frac{2 q \tau}{\delta-q}
    \right) \\
&\le&
 \kappa \left(
    7 S_q \exp\left(\frac{-\delta^2}{2 S_q q^2}\right) + 3\frac{\delta}{q}+1
        +2\delta
        + \frac{2 q S_q}{\delta-q}
    \right) 
    \EEAS
Remark that if we take $\delta=\left\lceil\sqrt{S_q\log S_q}~\right\rceil$, 
    then since $q\in\mathbf{Q}$, it is easy to check that 
    $S_q > 2\delta$ (from the condition $S_q\ge 15$) and that
    there exists a universal constant $C$ such that~:
    \begin{equation}
   \label{bounddp}
    \frac{|\Delta_q|}{S_q} \le C \text{\quad and \quad}
 \lim_{S_q\rightarrow +\infty}
    \frac{|\Delta_q|}{S_q}=0.
     \end{equation}


\noindent
Now, we can remember the definition of $F_N$ in 
Equation (\ref{eq:defFN}), and apply the previous result coordinate by coordinate for
$p\in\mathbf{Q}$. In the next calculations, given the sequence $(E^N_p)_\pp$,
we define the sets of parameters $(\widetilde{E}_p, \widetilde{S}_p)_\pp$ where
$\forall \pp, \widetilde{E}_p=0, \widetilde{S}_p=pE^N_p$.
\begin{multline}
\label{eqineq}
NF_N = \mathbb{E}F(E^N, 0) 
     \le 
    \mathbb{E}F((\widetilde{E}_p), (\widetilde{S}_p)) - 
    \sum_{\pp}
    \widetilde{S}_p \frac{p-1}{p} \mathbb{E}\log \langle\mathcal{E}_p\rangle_x  \\
        + 
    \sum_{p\in\mathbf{Q}}
        |\Delta_p| +
     \kappa \sum_{p\in \mathbf{P}\setminus\mathbf{Q}} \widetilde{S}_p\left(\frac{p-1}{p}+1+\frac{1}{p}\right)
\end{multline}
where the last term of the right hand side comes from bounding 
$|\mathbb{E}\log \langle\mathcal{E}_p\rangle_x|\le\kappa$
and using the Lipschitz condition (\ref{flip}) between $(E^N,0)$ and $((\widetilde{E}_p), (\widetilde{S}_p))$   
since we did not conduct the interpolation
on $p\in\mathbf{P}\setminus\mathbf{Q}$.

\subsection{Asymptotic Approximation}
\label{subsec:approx}

Until the end of this proof, we will work on the right hand side of Inequality (\ref{eqineq}) by successive
approximations to make Equation (\ref{eq:thmrs}) of Theorem \ref{thmrs} appear.
Writing $\mathcal{M}=\mathcal{M}((\widetilde{E}_j), (\widetilde{S}_j))$, we have, for $i_{p,l}$ being the vertex
matched to the half-edge $(p,l,0)$ associated to a site $s^p_l$, and conversly, $p_{i,d}$ being the type
(that is some $\pp$) of the site matched to the half-edge $(i,d)\in\mathcal{H}$~:
\begin{multline}
\label{separU}
   F((\widetilde{E}_p), (\widetilde{S}_p)) = \\  
   \frac{1}{|\mathcal{M}|}\sum_{\mathfrak{m}\in\mathcal{M}}\log\left(
    \sum_{\sigma\in\{-1,1\}^\mathbf{V}}\exp\left(
        \sum_\pp\sum_{k=1}^{\widetilde{S}_p} U_{p,k}(\sigma_{i_{p,k}} ; \zeta)
            + \sum_{i\in\mathbf{V}} h_i(\sigma_i)
        \right)
   \right) \\
   =
   \frac{1}{|\mathcal{M}|}\sum_{\mathfrak{m}\in\mathcal{M}}\log\left(
   \prod_{i\in\mathbf{V}}\sum_{\sigma=\pm 1}
        \exp\left(\sum_{d=1}^{d_i}U_{p_{i,d},(i,d)}(\sigma ; \zeta) + h_i(\sigma)
            \right)
   \right) \\
   =
   \sum_{i\in\mathbf{V}}\frac{1}{|\mathcal{M}|}\sum_{\mathfrak{m}\in\mathcal{M}}
   \log\left(
        \sum_{\sigma=\pm 1}
        \exp\left(\sum_{d=1}^{d_i}U_{p_{i,d}, (i,d)}(\sigma ; \zeta) + h_i(\sigma)
            \right)
   \right) \\
\end{multline}
In the following, according to Condition \ref{cond:deg1}, we define the quantity
$M=\sum_{i=1}^N d_i^N = \sum_{\pp}pE_p^N$.
Given $i\in\mathbf{V}$, $p_1,\ldots,p_{d_i}\in\mathbf{P}$ 
    and under a uniform choice of $\mathfrak{m}\in\mathcal{M}$, we have that
$\mathbb{P}_{\mathfrak{m}}(p_{i,1}=p_1,\ldots,p_{i,d_i}=p_{d_i})$ equals to
$$
    \frac{\widetilde{S}_{p_1}}{M} 
        \frac{\widetilde{S}_{p_2}-1_{p_1=p_2}}{M-1} \ldots 
        \frac{\widetilde{S}_{p_{d_i}}-1_{p_1=p_{d_i}}\ldots - 1_{p_{d_i-1}=p_{d_i}}}{M-d_i+1}
$$
Hence by Lemma \ref{boundZ} and Condition (\ref{cond2}), we will approximate
$F((\widetilde{E}_p), (\widetilde{S}_p))$ by the following term~:
$\widehat{F}((\widetilde{E}_p), (\widetilde{S}_p)) \coloneqq $
$$
    \sum_{i\in\mathbf{V}}\sum_{p_1,\ldots,p_{d_i} \in \mathbf{P}}
        \frac{\widetilde{S}_{p_1}\ldots \widetilde{S}_{p_{d_i}}}{M^{d_i}}
   \log\left(
        \sum_{\sigma=\pm 1}
        \exp\left(\sum_{d=1}^{d_i}U_{p_d,(i,d)}(\sigma ; \zeta) + h_i(\sigma)
            \right)
        \right)
$$ 
\begin{multline}
\label{eqecart}
\left|F((\widetilde{E}_p), (\widetilde{S}_p)) - \widehat{F}((\widetilde{E}_p), (\widetilde{S}_p))  
    \right| \\
\le
    \sum_{i\in\mathbf{V}}\sum_{p_1,\ldots,p_{d_i}\in\mathbf{P}}
        \left|
            \frac{\widetilde{S}_{p_1}\ldots \widetilde{S}_{p_{d_i}}}{M^{d_i}}
            - 
        \mathbb{P}_{\mathfrak{m}}(p_{i,1}=p_1,\ldots,p_{i,d_i}=p_{d_i}) 
         \right| (\log(2)+(d_i+1)\kappa)    \\
\le
    \sum_{i\in\mathbf{V}}(\log(2)+(d_i+1)\kappa)\frac{2d_i^2}{M-d_i}
\end{multline}
Putting Equations (\ref{eqineq}), (\ref{separU}) 
and (\ref{eqecart}) together, we find~:
\begin{multline}
NF_N \le
\mathbb{E}\widehat{F}((\widetilde{E}_p), (\widetilde{S}_p))
-
    \sum_{\pp}
    \widetilde{S}_p \frac{p-1}{p} \mathbb{E}\log \langle\mathcal{E}_p\rangle_x  \\
        + 
    \sum_{p\in\mathbf{Q}}
        |\Delta_p| +
     2\kappa \sum_{p\in \mathbf{P}\setminus\mathbf{Q}} \widetilde{S}_p
+
    \sum_{i\in\mathbf{V}}(\log(2)+(d_i+1)\kappa)\frac{2d_i^2}{M-d_i}
\end{multline}

It remains to study the limits of the terms of the right-hand side as $N$ tends to infinity. 
Consistently with the definitions of the measures $\mu, \nu$ and $\rho$, recall the definitions of $\mu_N, \nu_N$
(\ref{cond:deg2},\ref{cond:deg3}) and we define the empirical measure $\rho_N$:
$$
\forall p\in\mathbf{P}, 
 \rho_N(p)=\frac{pE^N_p}{\sum_{q\in\mathbf{P}}qE^N_q}
$$
Moreover, for any probability measure, say $f$, on $\mathbb{N}$, we write its mean $\bar{f}=\sum_{n\in\mathbb{N}}nf(n)$.
\newline

\textbf{First Term.}
With the definitions in the statement of Theorem \ref{thmrs},
\begin{equation}
\begin{split}
&\left|\frac{1}{N}\mathbb{E}\widehat{F}((\widetilde{E}_j), (\widetilde{S}_j)) -
    \mathbb{E}\log\left(\sum_{\sigma=\pm 1}\exp\left(\sum_{k=1}^dU_{p_k,k}(\sigma ; \zeta) + h(\sigma)\right)\right) \right| \\
= &   \left|\sum_{d\in\mathbb{N}}\sum_{p_1,\ldots,p_d}
        \left(\mu_N(d)\rho_N(p_1)\ldots\rho_N(p_d)
            - \mu(d)\rho(p_1)\ldots\rho(p_d)\right)  \right. \\ 
 &   \qquad \qquad \qquad  \qquad \qquad  \left.     
        \mathbb{E}\log\left(\sum_{\sigma=\pm 1}\exp\left(\sum_{k=1}^dU_{p_k,k}(\sigma ; \zeta) + h(\sigma)\right)\right) \right| 
            \\
\le&
      \sum_{d\in\mathbb{N}}\sum_{p_1,\ldots,p_d}
        |\mu(d)-\mu_N(d)|\rho_N(p_1)\ldots\rho_N(p_d)
            (\kappa (d+1) +\log(2)) \\
& +
      \sum_{d\in\mathbb{N}}\sum_{p_1,\ldots,p_d}
        \mu(d)
        \left|\rho_N(p_1)\ldots\rho_N(p_d)
        - \rho(p_1)\ldots\rho(p_d)\right|
            (\kappa (d+1) +\log(2))
            \\
  \le &
      \sum_{d\in\mathbb{N}}
        |\mu(d)-\mu_N(d)|
            (\kappa (d+1) +\log(2)) \\
&+
      \sum_{d\in\mathbb{N}}
        \mu(d)
            (\kappa (d+1) +\log(2))
      \sum_{p_1,\ldots,p_d}
        \left|\rho_N(p_1)\ldots\rho_N(p_d)
        - \rho(p_1)\ldots\rho(p_d)\right|
\end{split}
\end{equation}
Hence using Conditions (\ref{cond:deg2},\ref{cond:deg3},\ref{cond:deg4}), it is easy to show (by a direct application for the first sum,
and a version of the dominated convergence theorem for the second one), that

\begin{equation}
\lim_{N\rightarrow\infty} \left|\frac{1}{N}\mathbb{E}\widehat{F}((\widetilde{E}_j), (\widetilde{S}_j)) -
    \mathbb{E}\log\left(\sum_{\sigma=\pm 1}\exp\left(\sum_{k=1}^dU_{p_k,k}(\sigma ; \zeta)+h(\sigma)\right)\right) \right| = 0
\end{equation}

\textbf{Second Term.}
$$
\frac{1}{N} \sum_\pp
    \widetilde{S}_p \frac{p-1}{p} \mathbb{E}\log \langle\mathcal{E}_p\rangle_x
    =
 \frac{M}{N}\sum_{\pp}\rho_N(p)\frac{p-1}{p}\mathbb{E}\log \langle\mathcal{E}_p\rangle_x
$$
By Conditions (\ref{cond:deg2},\ref{cond:deg3},\ref{cond:deg4}), $\frac{M}{N}$ tends to $\bar{\mu}=\mathbb{E} d$, 
and since $\mathbb{E}\log \langle\mathcal{E}_p\rangle_x$ is bounded by $\kappa$ 
we have

\begin{equation}
\lim_{N\rightarrow\infty}
    \frac{1}{N} \sum_\pp
    \widetilde{S}_p \frac{p-1}{p} \mathbb{E}\log \langle\mathcal{E}_p\rangle_x =
    \mathbb{E} [d] \mathbb{E}\left[\frac{p-1}{p}\log\langle \mathcal{E}_p\rangle_x\right] 
\end{equation}

\textbf{Third Term.}
The following term tends to zero 
by Equations (\ref{bounddp}) and (\ref{cond:deg3}), since
$\frac{M}{N} $ converges, $\rho_N$ converges to $\rho$ and
$\frac{\Delta_p}{\widetilde{S}_p}$ is uniformely bounded in $p$ and tends to zero:
\begin{equation}
    \frac{1}{N} \sum_{p\in\mathbf{Q}}
        |\Delta_p|  = \frac{M}{N} \sum_{p\in\mathbf{Q}} \rho_N(p) \frac{\Delta_p}{\widetilde{S}_p} \le
         \frac{M}{N} \sum_{p\in\mathbf{Q}} \rho(p) \frac{\Delta_p}{\widetilde{S}_p} + |\rho(p)-\rho_N(p)|C
\end{equation}

\textbf{Fourth Term.}
Similarly, the next term tends to zero since
$\ind_{p\in\mathbf{P}\setminus\mathbf{Q}}$ is uniformely bounded by 1 and is eventually equal to zero for all $p$ such that
$\rho(p)>0$:
\begin{equation}
\frac{1}{N}\sum_{p\in\mathbf{P}\setminus\mathbf{Q}} \widetilde{S}_p = 
\frac{M}{N}\sum_{p\in\mathbb{N}} \rho_N(p) \ind_{p\in\mathbf{P}\setminus\mathbf{Q}} 
\end{equation}

\textbf{Fifth Term.}
To show that
$ 
\lim_{N\rightarrow +\infty} 
    \frac{1}{N} 
    \sum_{i\in\mathbf{V}}(\log(2)+(d_i+1)\kappa)\frac{2d_i^2}{M-d_i} = 0
    $, it is enough to show that 
    $
    \frac{1}{N}\sum_{i\in\mathbf{V}}\frac{d_i^3}{M-d_i}
    $ tends to zero.
$$
\frac{1}{N}\sum_{i\in\mathbf{V}}\frac{d_i^3}{M-d_i} 
= 
\frac{N}{M}\sum_{i\in\mathbf{V}}\frac{d_i^3}{N^2}\frac{M}{M-d_i} 
= 
\frac{N}{M}\sum_{i\in\mathbf{V}}\frac{d_i^3}{N^2}
+
\frac{N}{M}\sum_{i\in\mathbf{V}}\frac{d_i^3}{N^2}\frac{d_i}{M-d_i} 
$$
Note that $\frac{N}{M}$ converges. By Condition (\ref{cond:deg4}) and Cauchy-Schwartz inequality, 

$$
\frac{1}{N^2}\sum_{i=1}^N d_i^3 \le \frac{1}{N}\sqrt{\frac{1}{N}\sum d_i^2}\sqrt{\frac{1}{N}\sum d_i^4}
\le \frac{1}{N^2}\sqrt{\sum d_i^2} \sum d_i^2 = O(\frac{1}{\sqrt{N}}) 
$$
Hence to finish the proof, it remains to show that $\limsup_{N\rightarrow\infty} \sup_i \frac{d^N_i}{M} < 1$.

\begin{multline}
\text{If } \frac{d_i}{M} \ge \frac{d_i\mu(d_i)}{\bar{\mu}} \text{ then }
\frac{d_i}{M}-\frac{d_i\mu(d_i)}{\bar{\mu}} \le
\left|
\frac{d_i \#\{j\in\mathbf{V}| d_j=d_i\}}{M}-\frac{d_i\mu(d_i)}{\bar{\mu}} 
\right| 
\\
\le
d_i\left|
    \frac{\mu_N(d_i)}{\overline{\mu_N}} - \frac{\mu(d_i)}{\bar{\mu}}
    \right|
    \le
\left|\frac{1}{\overline{\mu_N}}-\frac{1}{\bar{\mu}}\right|\sum_{d\in\mathbb{N}}d\mu(d) + 
    \frac{1}{\overline{\mu_N}}\sum_{d\in\mathbb{N}}d|\mu(d)-\mu_N(d)|
\end{multline}
Condition (\ref{cond:deg2}) shows that the right-hand term tends to zero and it is independent of $i$, 
thus, if $\mu$ is not concentrated in one point,

$$
\limsup_{N\rightarrow\infty} \sup_i \frac{d^N_i}{M} \le \sup_d \frac{d\mu(d)}{\bar{\mu}} < 1
$$
If $\mu$ is concentrated in one point $d$, then $\lim_{N\rightarrow\infty} \frac{d}{M}=0$.
This concludes the proof of Theorem~\ref{thmrs}. \qed

\section{A General Weighted Bound}
\label{sec:weight}

Sections \ref{sec:weight} and \ref{sec:step} describe how to tune and generalize the bound of Theorem \ref{thmrs}. 
In terms of proof techniques, the actual contribution of this paper is Theorem \ref{thmrs}: the generalization is just
a rewriting of the proof of Panchenko and Talagrand that generalizes \cite[Theorem 1]{diluted} to \cite[Theorem 4]{diluted}. 
Instead of copy-pasting
half of \cite{diluted}, we made the choice to refer the reader to the proof of \cite{diluted}, and to just present in the next sections
the small changes to make to the proof of Theorem \ref{thmrs}.

We use the weighting scheme defined in \cite{diluted} with the same notations~: $\Gamma$ is a countable set,
$(x^\gamma)_{\gamma\in\Gamma}$ a sequence of random variables depending somehow (this will be fixed in the next section)
on a distribution $\zeta \in \mathcal{L}_{r+1}$, $(x^{s,\gamma}_l)_{s\in\mathbf{S},l\ge 0}$ are independent copies of this sequence,
and
$(\theta_{p,s})_{s\in\mathbf{S}}$ are independent copies of $\theta_p$. As in definitions (\ref{eq:defUi},\ref{eq:defUi-1rsb}), define
\BEA
U^{\gamma}_{p,s}(\epsilon ; \zeta)=U_p(\theta_{p,s},x^{s,\gamma}_1,\ldots, x^{s,\gamma}_{p-1},\epsilon)
\EEA
and for a graph $G(\mathbf{V},(\mathbf{E}_p)_{\pp}, (\mathbf{S}_p)_{\pp})$,
$\sigma\in\Sigma_N$
\begin{equation}
-H_G^\gamma(\sigma) =  \sum_\pp \left( \sum_{e\in \mathbf{E}_p} \theta_{p,e}(\sigma_{\d e}) 
                              +  \sum_{s\in \mathbf{S}_p} U^{\gamma}_{p,s}(\sigma_{\d s} ; \zeta)  \right)
                                + \sum_{i\in\mathbf{V}}h_i(\sigma_i).
\end{equation}
Note that for $\gamma\in\Gamma, H_{G_N}^\gamma=H_{G_N}=H_N$ since $G_N$ has only edges and no sites.

For a sequence of non-negative random variables $(v_\gamma)_{\gamma\in\Gamma}$ with $\sum_{\gamma\in\Gamma}v_\gamma=1$, we
define the Gibbs measure and its Gibbs average on $\Sigma_N\times\Gamma$ by
\begin{equation}
\label{gaverage}
\langle \ind_{\sigma,\gamma} \rangle_G = v_\gamma\exp(-H^\gamma_G(\sigma))/Z_N
\end{equation}
where $Z_N=\sum_{\sigma,\gamma}v_\gamma\exp(-H^\gamma_G(\sigma))$.

We will need to consider the successive steps of the interpolation defined in
Equation (\ref{defwalk}) along each coordinate. Let
$(E_p)_\pp, (S_p)_\pp$ and
$(\widetilde{E}_p)_\pp$, $(\widetilde{S}_p)_\pp$ 
to be defined as in the proof of Theorem \ref{thmrs} (that is
$\widetilde{G}_N$ is the graph with sites only except for $p\notin \mathbf{Q}$)
and 
$(E^t_p)_\pp, (S^t_p)_\pp$  as in Equation (\ref{defwalk})
where $\tau=\tau_p=\widetilde{S}_p-2\delta_p$, we have
for an enumeration of 
$\{p_1,\ldots,p_k\}$ of the values in $\mathbf{P}$ that appear in the graph (i.e. $E_p+S_p \neq 0$), 
$1 \le t\le \tau_{p_i}$ for some $i$, 
\begin{multline}
G^{p_i,t} = G\left((E_{p_1},\ldots, E_{p_{i-1}}, E^t_{p_i}, \widetilde{E}_{p_{i+1}}\ldots, \widetilde{E}_{p_k}),\right.\\ 
    \left.(S_{p_1}, \ldots, S_{p_{i-1}}, S^t_{p_i}-1, \widetilde{S}_{p_{i+1}},\ldots, \widetilde{S}_{p_k})\right)
\end{multline}
It is the graph defining the average used in
Proposition \ref{propstep} at the $t$-th step of the successive interpolation on the $p_i$-th coordinate.

\begin{theorem}[A general bound]
    \label{thmgeneral}
    Let
$$
\widetilde{F}_N=\frac{1}{N}\mathbb{E}\log\sum_{\sigma\in\Sigma_N,\gamma\in\Gamma}v_\gamma\exp(-H^\gamma_{\widetilde{G}_N}(\sigma))
$$

    If conditions (\ref{cond1},\ref{cond2},\ref{cond3}) are satisfied, then
\begin{equation}
F_N \le \widetilde{F}_N
    - \frac{1}{N}\sum_{p\in \mathbf{Q}}
        \sum_{t=1}^{\tau_p}
        \frac{p-1}{p} \mathbb{E}\log \left\langle \langle\mathcal{E}_p\rangle_{x^\gamma} \right\rangle_{G^{p,t}}
    + o_N(1)
\end{equation}
where $x^\gamma=(x^\gamma_1,\ldots,x^\gamma_{p})$, and $(x^\gamma_l)_{\gamma\in\Gamma}$, for $l\in\mathbb{N}$ are independent copies
of $(x^\gamma)_{\gamma\in\Gamma}$.
    \end{theorem}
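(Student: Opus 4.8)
The plan is to replay the proof of Theorem~\ref{thmrs} with the Gibbs average $\langle\cdot\rangle_\mfrak$ on $\Sigma_N$ replaced everywhere by the weighted average $\langle\cdot\rangle_{G[\mfrak]}$ on $\Sigma_N\times\Gamma$ of \eqref{gaverage}, and to stop the argument at the analogue of \eqref{eqineq} rather than pass to the $N\to\infty$ limit (this is exactly why the step-by-step increments stay an unsimplified sum over $t$ here). Concretely, one sets $F((E_p),(S_p))=\frac1{|\Mcal|}\sum_{\mfrak\in\Mcal}\log\sum_{\sigma,\gamma}v_\gamma\exp(-H^\gamma_{G[\mfrak]}(\sigma))$; since $G_N$ carries no site, $H^\gamma_{G_N}=H_N$ and $\sum_\gamma v_\gamma=1$, so $\mathbb E F(E^N,0)=NF_N$, and likewise $\mathbb E F(\widetilde E,\widetilde S)=N\widetilde F_N$. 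The uniformity lemma before Lemma~\ref{lem:ineq} is untouched; Lemma~\ref{lem:ineq} and Proposition~\ref{propend} transcribe verbatim with $\langle\cdot\rangle_{G[\mfrak]}$ in place of $\langle\cdot\rangle_\mfrak$, since they use only the uniform enlargement of matchings and the Lipschitz bound \eqref{flip}, and the latter still holds because $|\theta_p|,|U_p|\le\kappa$ together with $\sum_\gamma v_\gamma=1$ force $|\log\langle\cdot\rangle_{G[\mfrak]}|$ to be controlled exactly as before.

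The one genuinely new ingredient is the weighted version of Proposition~\ref{propstep}: under its hypotheses, for a random $p$-edge-pairing and an independent $p$-site-pairing,
\begin{multline*}
\mathbb E\Bigl(\tfrac1p\log\langle\exp\theta_{p,e}(\sigma_{\d e})\rangle_{G[\mfrak]}-\log\langle\exp U^\gamma_{p,s}(\sigma_{\d s};\zeta)\rangle_{G[\mfrak]}\Bigr)\\
\le-\tfrac{p-1}p\,\mathbb E\log\bigl\langle\langle\mathcal E_p\rangle_{x^\gamma}\bigr\rangle_{G[\mfrak]}+\tfrac{2p\kappa}{\delta-p}.
\end{multline*}
I would prove this by running the replica computation of Section~\ref{subsec:replica} word for word, now with replicas $(\sigma^\ell,\gamma^\ell)$ drawn i.i.d.\ from the weighted Gibbs measure on $\Sigma_N\times\Gamma$ (the logarithmic series in $(-b)^n$ converging for the same reason as there). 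After applying the expectation $\mathbb E_0$ over $a_p,b_p,f_{p,1},\dots,f_{p,p}$ and over the fresh copies $(x^\gamma_l)_\gamma$ of the site/$\mathcal E_p$ fields — which commutes with $\langle\cdot\rangle_{G[\mfrak]}$ because these variables are independent of $G[\mfrak]$ and of the weights $v_\gamma$ — the edge contribution is $\langle B_n^{\,p}\rangle_{G[\mfrak]}$ with the same $\gamma$-free $B_n$ as in the unweighted proof, the site contribution is $\langle B_n\,\Gamma_n^{\,p-1}\rangle_{G[\mfrak]}$, and the $\mathcal E_p$ contribution is $\langle\Gamma_n^{\,p}\rangle_{G[\mfrak]}$, where the deterministic constant $C_n$ of the unweighted proof is now replaced by the function of the $\gamma$-replicas
$$\Gamma_n(\gamma^1,\dots,\gamma^n)=\mathbb E_0\Bigl[\prod_{\ell=1}^n\frac{\on{Av}f(\epsilon)\exp(\epsilon x^{\gamma^\ell})}{\on{ch}(x^{\gamma^\ell})}\Bigr]$$
(the factorization over the $p-1$, resp.\ $p$, i.i.d.\ copies $(x^\gamma_l)_\gamma$ being unchanged). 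Hence, term by term in $n$, it suffices that $\bigl\langle B_n^{\,p}-pB_n\Gamma_n^{\,p-1}+(p-1)\Gamma_n^{\,p}\bigr\rangle_{G[\mfrak]}\ge0$. By Condition~\eqref{cond3} either $p$ is even, or $f_p\ge0$ a.s.\ and then $B_n\ge0$ and $\Gamma_n\ge0$; in either case $x^p-pxy^{p-1}+(p-1)y^p$ is nonnegative at $(x,y)=(B_n,\Gamma_n)$ for every realization of the replicas, and $\langle\cdot\rangle_{G[\mfrak]}$ being a positive linear functional on $\Sigma_N\times\Gamma$ the inequality follows — the remainder $\frac{2p\kappa}{\delta-p}$ being produced by Lemma~\ref{boundZ} exactly as before. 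Note that $\langle\langle\mathcal E_p\rangle_{x^\gamma}\rangle_{G[\mfrak]}$ genuinely depends on $\mfrak$, unlike the constant $\mathbb E\log\langle\mathcal E_p\rangle_x$, which is precisely why the sum over $t$ cannot be collapsed.

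It then remains to run, for each $p\in\mathbf Q$ and successively along the coordinates as in \eqref{eqineq}, the stopped random-walk interpolation of Section~\ref{subsec:interpol}: the martingale $(C_t)$, the stopping time $T$, the Azuma estimate \eqref{azuma} and the endpoint bounds of Proposition~\ref{propend} are insensitive to the weights, and the graph defining the increment at time $t$ of the walk on coordinate $p$ is by construction $G^{p,t}$. Summing the step inequalities thus gives
$$NF_N\le N\widetilde F_N-\sum_{p\in\mathbf Q}\sum_{t=1}^{\tau_p}\frac{p-1}p\,\mathbb E\log\bigl\langle\langle\mathcal E_p\rangle_{x^\gamma}\bigr\rangle_{G^{p,t}}+\mathcal R_N,$$
where $\mathcal R_N$ gathers the $\Delta_p$-type errors (still bounded by \eqref{bounddp} with the choice $\delta_p=\lceil\sqrt{\widetilde S_p\log\widetilde S_p}\,\rceil$), the cost of replacing each $\mathbb P(T>t)$ by $1$ (of order $\kappa\sqrt{\widetilde S_p}$ by \eqref{azuma}), and the Lipschitz cost on $\mathbf P\setminus\mathbf Q$; exactly as in Section~\ref{subsec:approx} one checks $\mathcal R_N=o(N)$, which yields the statement after dividing by $N$. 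I expect the only real obstacle to be the bookkeeping of the replica step — keeping straight which fresh copies $(x^\gamma_l)_\gamma$ are integrated by $\mathbb E_0$ and which are frozen into the Hamiltonian of $G^{p,t}$, so that $\mathbb E_0$ truly commutes with $\langle\cdot\rangle_{G^{p,t}}$ — but conceptually the proof is that of Theorem~\ref{thmrs} with the weights carried passively through every estimate, i.e.\ the passage from \cite[Theorem~1]{diluted} to \cite[Theorem~4]{diluted}.
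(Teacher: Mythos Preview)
Your proposal is correct and follows essentially the same route as the paper: redefine the free energy via the weighted partition function \eqref{gaverage}, carry Lemma~\ref{lem:ineq} and Proposition~\ref{propend} through unchanged, upgrade Proposition~\ref{propstep} by running the replica expansion with replicas $(\sigma^\ell,\gamma^\ell)$ on $\Sigma_N\times\Gamma$ and replacing the constant $C_n$ by the $\gamma$-dependent quantity you call $\Gamma_n$ (the paper writes it $C_n(\gamma_1,\ldots,\gamma_n)$), and then rerun the stopped random-walk interpolation coordinate by coordinate, stopping at the analogue of \eqref{eqineq}. Your write-up is in fact more explicit than the paper's own sketch, which simply asserts that ``the proof of Theorem~\ref{thmrs} holds with almost no changes'' and records the new $C_n$.
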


\begin{proof}
The proof of Theorem \ref{thmrs}, by interpolating coordinate by coordinate holds with almost no changes except that the free
energy is now defined for the new partition function (\ref{gaverage}).
Proposition \ref{propend} holds with no changes to its proof, and
the equation of Proposition \ref{propstep} becomes
\begin{multline*}
    \mathbb{E}
	\left(
	\frac{1}{p}\log\left\langle\exp(\theta_{p,e}(\sigma_{\d e}
                            ))\right\rangle_{G[\mathfrak{m}]} -  
      \log\left\langle \exp(U^{\gamma}_{p,s}(\sigma_{\d s}; \zeta))\right\rangle_{G[\mathfrak{m}]}
	\right) \\
    \le -\frac{p-1}{p} \mathbb{E}\log \left\langle\langle\mathcal{E}_p\rangle_{x^\gamma}\right\rangle_{G[\mathfrak{m}]}
                + \frac{2p\kappa}{\delta-p}
\end{multline*}
where $\langle\cdot\rangle_{G[\mathfrak{m}]}$ is now defined for some 
matching $\mathfrak{m}\in\mathcal{M}$ by the new average of Equation (\ref{gaverage}).  
As in \cite{diluted}, the rest of the proof is the same, except that we now define
$$
C_n(\gamma_1,\ldots,\gamma_n)=\mathbb{E}_0\prod_{i=1}^n\frac{\on{Av}f_{p,1}(\epsilon)\exp\epsilon x_1^{\gamma_i}}{\on{ch}x_1^{\gamma_i}}
$$
\end{proof}

\section{The r-step of Replica Symmetry Breaking Bound}
\label{sec:step}

We will now make a specific choice for the weights $(v_\gamma)_\gamma$ and
the random variables $(x_\gamma)_\gamma$ to get an explicit bound. Again, we refer the reader 
to \cite{diluted} where this choice is thoroughly explained and proved. 

For the proof, we need to complexify the definitions of the statement of Theorem \ref{thmstep}.
This extra level of complexity will vanish at the end of the proof.
For an integer $r\ge 1$, let $\Gamma=\mathbb{N}^r$ and $0<m_1<\ldots<m_r<1$ be some real parameters.
Let $\mathcal{L}_1$ be a set of probability measures on $\R$, and
by induction for $l\leq r$ we define $\mathcal{L}_{l+1}$ as a set
of probability measures on $\mathcal{L}_{l}.$
Let us fix $\zeta^{(r+1)} \in \mathcal{L}_{r+1}$ (our basic parameter) and define a random sequence 
$(\eta, \eta(\gamma_1), \ldots, \eta(\gamma_1, \ldots, \gamma_{r-1}), x(\gamma_1,\ldots, \gamma_r))$ as follows.  
The element $\eta$ of $\mathcal{L}_r$ is distributed according to $\zeta$. Given $\eta$, 
the sequence $(\eta(\gamma_1))_{\gamma_1 \geq 1}$ of elements of $\mathcal{L}_{r-1}$ is i.i.d distributed like $\eta$.  
For $1\leq l\leq r-1$,  given  all the elements 
$\eta(a_1, \ldots, a_{s})$ for all values of the integers $a_1, \ldots, a_s$ and all $s \leq l-1$,
 the sequence $(\eta(\gamma_1, \ldots, \gamma_l))_{\gamma_l \geq 1}$ of elements of $\mathcal{L}_{r-l}$  
is i.i.d  distributed like $\eta(\gamma_1, \ldots, \gamma_{l-1})$, and these sequences are independent 
of each other for different values of $ (\gamma_1, \ldots, \gamma_{l-1})$.   
Finally, given  all the elements $\eta(a_1, \ldots, a_{s})$ for all values of the integers 
$a_1, \ldots, a_s$ and all $s \leq r-1$ the sequences  
$x(\gamma_1,\ldots,\gamma_r), \gamma_r\geq 1$ 
are an i.i.d. sequences on $\R$ with the distribution
$\eta(\gamma_1,\ldots,\gamma_{r-1})$ and these sequences
are independent for different values of  $(\gamma_1,\ldots,\gamma_{r-1}).$
The process of generating the $x$'s can be represented schematically as
\begin{equation}
\zeta\to\eta \to\eta(\gamma_1)\to\ldots
\to\eta (\gamma_1,\ldots,\gamma_{r-1})\to
x(\gamma_1,\ldots,\gamma_r).
\label{xs}
\end{equation}
Let us consider an arbitrary countable index
set $\Omega$ that will be fixed to $\Omega=\mathbf{P}\times\mathbb{N}^2$.
For $\omega \in \Omega$, we consider independent copies 
$(\eta_\omega, \eta_\omega(\gamma_1), \ldots$, $\eta_\omega(\gamma_1, \ldots, 
\gamma_{r-1})$, $x_\omega(\gamma_1,\ldots, \gamma_r))$ of $(\eta, 
\eta(\gamma_1), \ldots,  $ $\eta(\gamma_1, \ldots, \gamma_{r-1}), 
x(\gamma_1,\ldots, \gamma_r))$.

For $ 0\leq j\leq r-1,$
let us denote by $\Fcal_j$ the $\sigma$-algebra generated by 
$\eta_{\omega}(\gamma_1,\ldots,\gamma_l)$
for $\omega\in\Omega,$ $l\leq j,$ $\gamma_1,\ldots,\gamma_l\geq 1$, and by the random variables $h_i$, 
$\theta_{p,i,j}$. 
Let us denote by $\EE_{j}$ the expectation given $\Fcal_j$ or, in other words,
with respect to $\eta_{\omega}(\gamma_1,\ldots,\gamma_l)$
for $\omega\in\Omega,$ $l>j,$ $\gamma_1,\ldots,\gamma_l\geq 1$
and
$x_{\omega}(\gamma_1,\ldots,\gamma_r)$
for $\omega\in\Omega,$ $\gamma_1,\ldots,\gamma_r\geq 1.$ In particular $\Fcal_0$ is 
generated by the variables $\eta_\omega$, $h_i$, $\theta_{p,i,j}$.

For a random variable $W \geq 0$ we define $T_rW = W$ and by induction, for $0 \leq l<r$ 
we define the random variable
$T_l W $  by
\begin{equation}
T_l W = \Bigl(
\EE_{l} (T_{l+1} W)^{m_{l+1}}
\Bigr)^{1/m_{l+1}}.
\label{T}
\end{equation}
We take $\Gamma=\mathbb{N}^r$, some parameters $0<m_1<\ldots<m_r<1$ and we define
$v_\gamma$ using Derrida-Ruelle cascades \cite{ruelle1987mathematical,panchenko2007guerra}. For $i=1,\ldots,r$,
let $(u_{\gamma_i})_{\gamma_i \ge 1}$ be the non-increasing enumeration of the points
generated by a Poisson point process on $\mathbb{R^+_\star}$ with intensity 
$x\mapsto x^{-1-m_r}$.

Consider a sequence $(u_{\gamma_1,\ldots,\gamma_l})_{\gamma_1,\ldots,\gamma_l\ge 1}$ such 
that for a fixed $(\gamma_1,\ldots,\gamma_{l-1})$, it is an independent copy of $(u_{\gamma_l})_{\gamma_l\ge 1}$.
Then
\begin{equation}
v_{\gamma_1,\ldots,\gamma_r}=\frac{\prod_{l=1}^r u_{\gamma_1,\ldots,\gamma_r}}{
        \sum_{\gamma'_1,\ldots,\gamma'_r}\prod_{l=1}^r u_{\gamma'_1,\ldots,\gamma'_r}}
\end{equation}
We use the following proposition, proved in \cite[Proposition 2]{diluted}.

\begin{proposition}
\label{propcascade}
Consider a function $V:\mathbb{R}^\Omega \mapsto \mathbb{R}, V\ge 0$ and the random variable
defined by $V(\gamma_1,\ldots,\gamma_r)=V((x_\omega(\gamma_1,\ldots,\gamma_r))_{\omega\in\Omega})$.
The random variable $T_l(V(\gamma_1, 
\ldots, \gamma_r))$ does not depend on $\gamma_{l+1}, \ldots, \gamma_r$, in particular 
the law of
$T_0 V(\gamma_1,\ldots,\gamma_r)$ does not depend on $\gamma_1,\ldots,\gamma_r$.
Assume that $\mathbb{E}V(\gamma_1,\ldots,\gamma_r)^2 < \infty$, then for arbitrary values
of the $\gamma_i$'s in the right hand side
\begin{equation}
\mathbb{E}\log\sum_{\gamma_1,\ldots,\gamma_r\ge 1}v_{\gamma_1,\ldots,\gamma_r}V(\gamma_1,\ldots,\gamma_r)=
\mathbb{E}\log T_0 V(\gamma_1,\ldots,\gamma_r) 
\end{equation} \qed
\end{proposition}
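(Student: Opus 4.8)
The plan is to prove this by induction on $r$; equally, one may simply invoke \cite[Proposition 2]{diluted}, of which what follows is a sketch, the statement being classical in the theory of Derrida--Ruelle cascades. The two facts I would isolate first concern a Poisson point process $\{u_k\}$ on $(0,\infty)$ with intensity $x\mapsto x^{-1-m}$, $m\in(0,1)$. (i) \emph{Marking/displacement:} if $\{W_k\}$ are, conditionally on a $\sigma$-field $\mathcal{G}$ independent of $\{u_k\}$, i.i.d.\ positive with $\mathbb{E}[W^m\mid\mathcal{G}]<\infty$, then conditionally on $\mathcal{G}$ the set $\{u_kW_k\}$ is again Poisson with intensity $\mathbb{E}[W^m\mid\mathcal{G}]\,x^{-1-m}$, equivalently it has the law of $\{(\mathbb{E}[W^m\mid\mathcal{G}])^{1/m}u_k'\}$ for an independent copy $\{u_k'\}$; this is the one-line computation $\mathbb{E}[\#\{k:u_kW_k>t\}\mid\mathcal{G}]=\mathbb{E}[W^m\mid\mathcal{G}]\,t^{-m}/m$. (ii) \emph{Integrability:} since $m<1$, $\Sigma:=\sum_k u_k<\infty$ a.s., $\mathbb{E}[\Sigma^\alpha]<\infty$ for $0\le\alpha<m$, and $\mathbb{E}|\log\Sigma|<\infty$ (upper tail from a fractional moment; lower tail because $\Sigma\ge\max_k u_k$ with $\mathbb{P}(\max_k u_k\le t)=e^{-t^{-m}/m}$). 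The two facts together yield $\mathbb{E}[\log(\sum_k u_kW_k/\sum_k u_k)\mid\mathcal{G}]=\tfrac1m\log\mathbb{E}[W^m\mid\mathcal{G}]$. The ordering $m_1<\dots<m_r$ guarantees that every partition function produced at a level $l+1$ is a one-sided stable-type variable with a finite $m_l$-th moment, so (i)--(ii) apply at every level.

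\emph{Base case $r=1$.} Here $v_{\gamma_1}=u_{\gamma_1}/\Sigma$ and, conditionally on $\mathcal{F}_0$, the variables $V(\gamma_1)=V((x_\omega(\gamma_1))_\omega)$ are i.i.d.\ positive (the $(x_\omega(\gamma_1))_\omega$ being i.i.d.\ over $\gamma_1$ given the $\eta_\omega$), with $\mathbb{E}[V(\gamma_1)^{m_1}\mid\mathcal{F}_0]=\mathbb{E}_0[V^{m_1}]=(T_0V)^{m_1}<\infty$ (by the $L^2$ hypothesis), and independent of $\{u_{\gamma_1}\}$. Applying the displayed identity with $\mathcal{G}=\mathcal{F}_0$, $m=m_1$ gives $\mathbb{E}[\log\sum_{\gamma_1}v_{\gamma_1}V(\gamma_1)\mid\mathcal{F}_0]=\tfrac1{m_1}\log\mathbb{E}_0[V^{m_1}]=\log T_0V$; averaging proves the case, and in fact the stronger conditional form $\mathbb{E}[\log\sum_\gamma v_\gamma V(\gamma)\mid\mathcal{F}_0]=\log T_0V$, which is what the induction carries. (That $T_lV$ is independent of $\gamma_{l+1},\dots,\gamma_r$ --- the first assertion of the proposition --- is immediate from the recursion (\ref{T}) and the permutation symmetry of sibling subtrees.)

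\emph{Induction step.} I would peel the outermost level: $\prod_{l=1}^r u_{\gamma_1\dots\gamma_l}=u_{\gamma_1}\,w^{(\gamma_1)}_{\gamma_2\dots\gamma_r}$, where for each $\gamma_1$ the $(w^{(\gamma_1)}_{\gamma_2\dots\gamma_r})$ are the unnormalised weights of an independent $(r-1)$-level cascade with parameters $m_2<\dots<m_r$ rooted at $\eta_\omega(\gamma_1)$. With $Z_{\gamma_1}=\sum_{\gamma_2\dots\gamma_r} w^{(\gamma_1)}_{\gamma_2\dots\gamma_r}$ and $R_{\gamma_1}=\sum_{\gamma_2\dots\gamma_r}(w^{(\gamma_1)}_{\gamma_2\dots\gamma_r}/Z_{\gamma_1})V(\gamma_1,\dots,\gamma_r)$, one has $\sum_\gamma v_\gamma V(\gamma)=(\sum_{\gamma_1}u_{\gamma_1}Z_{\gamma_1}R_{\gamma_1})/(\sum_{\gamma_1}u_{\gamma_1}Z_{\gamma_1})$, and the induction hypothesis gives $\mathbb{E}[\log R_{\gamma_1}\mid\mathcal{F}_1]=\log T_1V(\gamma_1)$ branch by branch. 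The one genuine difficulty is that numerator and denominator share the level-$1$ points $u_{\gamma_1}$ and the subtree variables $Z_{\gamma_1}$, so they cannot be treated factor by factor; this is precisely the invariance of Ruelle cascades under reweighting, i.e.\ the content of \cite[Proposition 2]{diluted}. Concretely, conditionally on $\mathcal{F}_0$ the marked process $\{(u_{\gamma_1},(Z_{\gamma_1},R_{\gamma_1}))\}$ is Poisson with i.i.d.\ marks, and pushing forward under $(y,(z,\rho))\mapsto(yz,(z,\rho))$ --- legitimate since $\mathbb{E}[Z^{m_1}\mid\mathcal{F}_0]<\infty$ thanks to $m_1<m_2$ --- turns $\{(u_{\gamma_1}Z_{\gamma_1},R_{\gamma_1})\}$ into $\{(c\,u'_{\gamma_1},\hat R_{\gamma_1})\}$ with $\{u'_{\gamma_1}\}$ a fresh $m_1$-Poisson set independent of the i.i.d.\ marks $\hat R_{\gamma_1}$ (the $Z^{m_1}$-size-biased law of $R$), the constant $c$ cancelling in the ratio; a final use of the base identity, followed by the same inner invariance to rewrite $\mathbb{E}[(Z R)^{m_1}\mid\mathcal{F}_0]=\mathbb{E}[Z^{m_1}\mid\mathcal{F}_0]\,\mathbb{E}_0[(T_1V)^{m_1}]$, leads to $\mathbb{E}[\log\sum_\gamma v_\gamma V(\gamma)\mid\mathcal{F}_0]=\tfrac1{m_1}\log\mathbb{E}_0[(T_1V)^{m_1}]=\log T_0V$, which closes the induction after taking the outer expectation. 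The main obstacle is thus not any single step but making this repeated numerator/denominator invariance rigorous with the correct $\sigma$-fields and moment gaps; since that is done in full in \cite{diluted}, the cleanest course in the present paper is simply to cite \cite[Proposition 2]{diluted}.
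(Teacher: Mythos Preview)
Your proposal is correct and in fact does strictly more than the paper: the paper gives no proof of this proposition at all, it simply cites \cite[Proposition 2]{diluted} (note the \qed placed immediately after the statement). Your sketch of the Poisson displacement/marking argument and the inductive peeling of the outermost level is the standard proof of that cited result, and your final remark that ``the cleanest course in the present paper is simply to cite \cite[Proposition 2]{diluted}'' is exactly what the authors chose to do.
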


\begin{proof}[Proof of Theorem \ref{thmstep}]
For $\pp, 1\le t \le \tau_p$ and $\gamma=(\gamma_1,\ldots,\gamma_{r})$, 
let $e(\gamma)=\langle\mathcal{E}_p\rangle_{x^\gamma}$ and 
    $Z(\gamma)= \sum_{\sigma\in\Sigma_N} \exp(-H^{\gamma}_{G^{p,t}}(\sigma))$. Then
we can apply Proposition \ref{propcascade} to show
\begin{multline}
\label{eq:ung-E}
\mathbb{E}\log\left\langle\langle\mathcal{E}_p\rangle_{x^\gamma} \right\rangle_{G[\mfrak]} 
= 
\mathbb{E}\log \frac{\sum_{\gamma}v_\gamma Z(\gamma)e(\gamma)}{\sum_{\gamma}v_\gamma Z(\gamma) }
\\ =
\mathbb{E}\log \sum_{\gamma}v_\gamma Z(\gamma)e(\gamma) - \mathbb{E}\log\sum_{\gamma}v_\gamma Z(\gamma) 
 =
\mathbb{E}\log T_0(eZ) - \mathbb{E}\log T_0Z 
\\ = 
\mathbb{E}\log (T_0e)(T_0Z) - \mathbb{E}\log T_0Z = 
\mathbb{E}\log T_0e
=    \mathbb{E}\log T_0\langle\mathcal{E}_p\rangle_{x^\gamma} 
\end{multline}
To produce the other term of Theorem \ref{thmstep}, we remark that, using notation of Equation (\ref{separU})~:
\begin{multline}
\label{eq:ung-U}
\widetilde{F}_N=\frac{1}{N}\mathbb{E}\log\sum_{\sigma\in\Sigma_N,\gamma\in\Gamma}v_\gamma\exp(-H^\gamma_{\widetilde{G}_N}(\sigma)) \\
=   \frac{1}{N}
   \frac{1}{|\mathcal{M}|}\sum_{\mathfrak{m}\in\mathcal{M}}
   \mathbb{E}\log\left(
   \sum_{\gamma}v_\gamma
   \prod_{i\in\mathbf{V}}\sum_{\sigma=\pm 1}
        \exp\left(\sum_{d=1}^{d_i}U_{p_{i,d},(i,d)}^{\gamma}(\sigma ; \zeta)+ h_i(\sigma_i)
            \right)
   \right) \\
=   \frac{1}{N}
   \frac{1}{|\mathcal{M}|}\sum_{\mathfrak{m}\in\mathcal{M}}
   \mathbb{E}\log T_0
   \left(
   \prod_{i\in\mathbf{V}}\sum_{\sigma=\pm 1}
        \exp\left(\sum_{d=1}^{d_i}U_{p_{i,d},(i,d)}^{\gamma}(\sigma ; \zeta)+ h_i(\sigma_i)
            \right)
   \right) \\
=   \frac{1}{N}
   \frac{1}{|\mathcal{M}|}\sum_{\mathfrak{m}\in\mathcal{M}}
   \sum_{i\in\mathbf{V}}
   \mathbb{E}\log T_0
   \left(
   \sum_{\sigma=\pm 1}
        \exp\left(\sum_{d=1}^{d_i}U_{p_{i,d},(i,d)}^{\gamma}(\sigma ; \zeta)+ h_i(\sigma_i)
            \right)
   \right) \\
\end{multline}
And with the same calculations as in the proof of Theorem \ref{thmrs},
for $d, (p_k)$ random variables as defined in Theorem \ref{thmstep},
\begin{equation}
\label{eq:ung-U2}
\widetilde{F}_N = 
    \mathbb{E}
        \log T_0 \left(\sum_{\sigma=\pm 1}\exp\left(\sum_{k=1}^dU^{\gamma}_{p_k,k}(\sigma ; \zeta) + h(\sigma)\right)\right) 
        +o_N(1)\\
\end{equation}
Since the right hand sides of Equations (\ref{eq:ung-E}, \ref{eq:ung-U}, \ref{eq:ung-U2}) do not depend on any specific
choice of $\gamma$, we get the statement of Theorem \ref{thmstep} which does not depend on $\Gamma$. Theorem \ref{th:1rsb}
is a direct consequence of Theorem \ref{thmstep}.
\end{proof}

\bibliographystyle{plain}
\bibliography{bound}

\end{document}